\newtheorem{theorem}{Theorem}
\newtheorem{Prop}[theorem]{Proposition}
\newtheorem{lemma}[theorem]{Lemma}
\numberwithin{theorem}{section}
\newtheorem{Cor}[theorem]{Corollary}
\theoremstyle{definition}
\newtheorem{Def}[theorem]{Definition}
\theoremstyle{remark}
\def\Z{\mathds{Z} }
\def\N{\mathds{N} }
\def\C{\mathds{C} }
\DeclareMathOperator{\Ad}{Ad}
\DeclareMathOperator{\End}{End}
\DeclareMathOperator{\bessel}{\mathcal B_\lambda}
\DeclareMathOperator{\U}{U}
\DeclareMathOperator{\sgn}{sgn}
\DeclareMathOperator{\Fock}{\mc F}
\DeclareMathOperator{\pFock}{F}
\DeclareMathOperator{\Lie}{Lie}
\newcommand{\bfip}[1]{\left<{#1}\right>_\mathcal B}
\newcommand{\ip}[1]{\left<{#1}\right>}
\newcommand{\ipJ}[1]{\left({#1}\right)_J}
\newcommand{\ipS}[1]{\left({#1}\right)_S}
\newcommand{\pt}[1]{\partial_{#1}}
\newcommand{\mf}[1]{\mathfrak{#1}}
\newcommand{\ds}[1]{\mathds{#1}}
\newcommand{\mc}[1]{\mathcal{#1}}
\newcommand{\ol}[1]{\overline{#1}}
\newcommand{\g}{{\mathfrak{g}}}
\newcommand{\oa}{\bar{0}}
\newcommand{\ob}{\bar{1}}
\newcommand{\minus}{\scalebox{0.9}{{\rm -}}}
\newcommand{\plus}{\scalebox{0.6}{{\rm+}}}
\DeclareMathOperator{\rol}{d\rho}
\DeclarePairedDelimiter\abs{\lvert}{\rvert}%
\DeclarePairedDelimiter\norm{\lVert}{\rVert}%
\let\oldabs\abs
\def\abs{\@ifstar{\oldabs}{\oldabs*}}
\let\oldnorm\norm
\def\norm{\@ifstar{\oldnorm}{\oldnorm*}}
\begin{document}
\title[A superunitary Fock model of $\ds D(2,1;\alpha)$]{A superunitary Fock model of the exceptional Lie supergroup $\ds D(2,1;\alpha)$}

\author{Sigiswald Barbier}
\address{Department of Electronics and Information Systems \\Faculty of Engineering and Architecture\\Ghent University\\Krijgslaan 281, 9000 Gent\\ Belgium.}
\email{Sigiswald.Barbier@UGent.be}

\author{Sam Claerebout}
\address{Department of Electronics and Information Systems \\Faculty of Engineering and Architecture\\Ghent University\\Krijgslaan 281, 9000 Gent\\ Belgium.}
\email{Sam.Claerebout@UGent.be}

\date{\today}
\keywords{Minimal representation, Fock model, Schr\"odinger model, Bessel operator, Lie superalgebra, Jordan superalgebra, Bessel-Fischer product.}
\subjclass[2010]{17B10, 17B60, 22E46, 58C50}

\begin{abstract}
We construct a Fock model of the minimal representation of the exceptional Lie supergroup $\ds D(2,1, \alpha)$. Explicit expressions for the action are given  by integrating to group level a Fock model of the Lie superalgebra    $D(2,1, \alpha)$ constructed earlier by the authors.  It is also shown that the representation is superunitary in the sense of de Goursac--Michel. 
\end{abstract}

\maketitle


\section{Introduction}

The main result of this paper is the construction of a Fock model of the minimal representation of the Lie supergroup $\ds  D(2,1,\alpha)$. We do this by integrating to group level the representation of the Lie superalgebra $D(2,1,\alpha)$ considered in \cite{BC3}. In that sense, this paper can be seen as a sequel to \cite{BC3}. We also show that this Fock model is superunitary in the sense of \cite{dGM}.  

Minimal representations of Lie groups have a long tradition and can be constructed in many settings \cite{BrylinskiKonstant, DvorskySahi, GanSavin, HKM, KM, Torasso, VergneRossi}. In the philosophy of the orbit method, minimal representations correspond to the minimal nilpotent orbit of the coadjoint action of the Lie group on the dual Lie algebra. They are `small' infinite-dimensional representations, or more technically, they attain the smallest Gelfand--Kirillov dimension of all possible infinite-dimensional representations \cite{GanSavin}. This implies that there are a lot of symmetries in their realisations which leads to a rich representation theory. 

Recently, there has been an effort to generalize the framework of minimal representations  to the setting of Lie supergroups and Lie superalgebras \cite{BC1, BF, BCD, BC3}. 
Although this is a logical next step, there are lot of technical and conceptual hurdles. For instance, a lot of tools used for Lie groups are not yet developed or become much more complex in the super setting.
Another obstacle is the fact that  in \cite{NeebSalmasian} it is shown that there are no superunitary representations for a large class of Lie supergroups in the standard definition \cite{CCTV}  of a superunitary representation. This has lead to the development of alternative definitions of what should be a superunitary representation \cite{dGM, Tuynman, Tuynman2}. However, at the moment no satisfactory definition has been found. Therefore it is important to construct concrete models of representations that `ought' to be superunitary, as we will do in this paper. 

For the orthosymplectic Lie supergroup $OSp(p,q|2n)$ a Schr\"odinger model of the minimal representation was constructed in \cite{BF} using the framework of Jordan (super)algebras developed in \cite{HKM}. This generalizes the minimal representation of $O(p,q)$ considered in \cite{KM, KO1,KO2,KO3}.
Later, also a Fock model and intertwining Segal-Bargmann transform for  $OSp(p,q|2n)$ were obtained in \cite{BCD}.

Recently, a Schr\"odinger model, Fock model and Segal-Bargmann transform of the Lie superalgebra $D(2,1,\alpha)$ were constructed \cite{BC3}. The paper \cite{BC3} works entirely on algebra level. In particular it does not say anything about unitarity. It does, however, show that there exists a superhermitian product for which the Fock model is invariant. 
The goal of this paper is to integrate the Fock model considered in \cite{BC3} to group level. We will show that the superhermitian product can be extended to a Hilbert space and that our representation extend to a superunitary representation in the sense of \cite{dGM}. 

\subsection{Contents}
Let us now describe the contents of this paper. 
We start in Section \ref{D(2,1,alpha)} by recalling the definition of the Lie superalgebra $D(2,1,\alpha)$ and giving an explicit expression of the Fock model considered in \cite{BC3}. 
In Section \ref{Section group}, we introduce the Lie supergroup $\ds D(2,1,\alpha)$ and deduce some properties we need to integrate the representation, while in Section \ref{Section Fock space}, we recall the necessary properties of the polynomial Fock space considered in \cite{BC3} and complete it to a Hilbert superspace. 

Section \ref{Section superunitarity} contains the main content of this paper. 
We start by giving an explicit form of the representation of the Lie supergroup $\ds  D(2,1,\alpha)$ in Theorem \ref{ExplicitK}. We also give two alternatives way to present this representation (Corollaries \ref{ExplicitKActive} and \ref{ExplicitKl}). 
Note, however, that for one generating element of $\ds D(2,1,\alpha)$ we were only able to give an explicit form if $\alpha > 0$.

We recall the definition of a superunitary representation (SUR) as introduced in  \cite{dGM} in  Subsection \ref{subsecton superunitary} and show that the Fock model is such a SUR if $\alpha <0$ (Theorem \ref{ThSuperUnitary}).
In \cite{dGM}, also the concept of a strong SUR is defined. However, we show that the Fock model is never a strong SUR (Theorem \ref{Theorem not strong sur}).

\subsection{Notations}
The field $\ds K$ will always mean the real numbers $\ds R$ or the complex numbers $\C$. Function spaces will always be defined over $\C$. We use the convention $\ds N = \{0,1,2,\ldots\}$ and denote the complex unit by $\imath$.

A supervector space is a $\Z/2\Z$-graded vector space $V=V_{\ol 0}\oplus V_{\ol 1}$. An element $v\in V$ is called homogeneous if $v\in V_i$, $i\in \Z/2\Z$. We call $i$ its parity and denote it by $|v|$. When we use $|v|$ in a formula, we are considering homogeneous elements, with the implicit convention that the formula has to be extended linearly for arbitrary elements. If $\dim(V_i) = d_i$, then we write $\dim(V) = (d_{\ol 0}|d_{\ol 1})$. We denote the super-vector space $V$ with $V_{\ol 0} = \ds K^m$ and $V_{\ol 1} = \ds K^n$ as $\ds K^{m|n}$. A superalgebra is a supervector space $A=A_{\ol 0}\oplus A_{\ol 1}$ for which $A$ is an algebra and $A_iA_j\subseteq A_{i+j}$.

\section{The Lie superalgebra $D(2,1;\alpha)$} \label{D(2,1,alpha)}
\subsection{The construction of $D(2,1;\alpha)$}\label{ssConstruct}
We can deform $D(2,1)=\mathfrak{osp}(4|2)$ to obtain a one-parameter family of $(9|8)$-dimensional Lie superalgebras of rank $3$. We will define these Lie superalgebras in the same way as we did in \cite{BC3} using a construction of Scheunert. We will use the same notations as in \cite{Mu}.

Consider a two-dimensional vector space $V$ with basis $u_{+}$ and $u_{\minus}$. Introduce a non-degenerate skew-symmetric bilinear form $\psi$ by $\psi(u_{\plus},u_{\minus})=1$. 
We will need three copies $(V_i, \psi_i)$, $i=1,2,3$ of $(V,\psi)$ and the corresponding Lie algebra $\mathfrak{sl}(V_i)= \mathfrak{sp}(\psi_i)$ of linear transformations preserving $\psi_i$. 

We use the following data to define a Lie superalgebra:
\begin{itemize}
\item  a Lie algebra $\g_{\oa}$,
\item  a $\g_{\oa}$-module $\g_{\ob}$,
\item a $\g_{\oa}$-morphism $p:S^2(\g_{\ob})\rightarrow \g_{\oa}$, with $S^2(\g_{\ob})$ the symmetric tensor power,
\item for all $a,b,c \in \g_{\ob}$ the morphism $p$ satisfies \begin{eqnarray}
 [p(a,b),c]+[p(b,c),a]+[p(c,a),b]=0, \label{Jacobi identity}
\end{eqnarray} where we denoted the $\g_{\oa}$-action on $\g_{\ob}$ by $[\cdot,\cdot]$. 
\end{itemize}
Then $\g= \g_{\oa} \oplus \g_{\ob}$ is a Lie superalgebra \cite[Remark 1.5]{CW}. 

We set
\begin{eqnarray*}
\g_{\oa} &=& \mathfrak{sp}(\psi_1) \oplus \mathfrak{sp}(\psi_2)\oplus \mathfrak{sp}(\psi_3) \\
\g_{\ob} & = & V_1 \otimes V_2 \otimes V_3
\end{eqnarray*}
and define the action of $\g_{\oa}$ on $\g_{\ob}$ by the outer tensor product
\[
(A,B,C)\cdot  x\otimes y \otimes z = A (x)\otimes y \otimes z+ x\otimes B(y) \otimes z+x\otimes y \otimes C(z).
\]
The $\g_{\oa}$-morphism $p$ is given by 
 \begin{align*}
  p(x_1\otimes x_2\otimes x_3,y_1\otimes y_2\otimes y_3)&= \sigma_1 \psi_2(x_2,y_2)\psi_3(x_3,y_3) p_1(x_1,y_1) \\
  &\quad +\sigma_2 \psi_3(x_3,y_3)\psi_1(x_1,y_1) p_2(x_2,y_2)\\
  &\quad +\sigma_3 \psi_1(x_1,y_1)\psi_2(x_2,y_2) p_3(x_3,y_3),
  \end{align*}
where $\sigma_i \in \ds K$  and $p_i\colon V_i \times V_i \to \mathfrak{sp}(\psi_i)$ is defined by
\[ p_i(x,y)z= \psi_i(y,z) x- \psi_i(z,x)y. \] 
Then $\g= \g_{\oa} \oplus \g_{\ob}$ is a Lie superalgebra if the morphism $p$ satisfies the Jacobi identity (\ref{Jacobi identity}). This is the case if and only if $\sigma_1+\sigma_2+\sigma_3=0$, see \cite[Lemma 4.2.1]{Mu}. If we denote  $\g$ by $\Gamma(\sigma_1,\sigma_2,\sigma_3)$ then we have \[ \Gamma(\sigma_1,\sigma_2,\sigma_3)\cong \Gamma(\sigma_1',\sigma_2',\sigma_3')
\]
if and only if there is a non-zero scalar $c$ and a permutation $\pi$ of $(1,2,3)$ such that $\sigma_i'= c \sigma_{\pi(i)}$ \cite[Lemma 5.5.16]{Mu}. 

We set \[D(2,1;\alpha) := \Gamma\left(\frac{1+\alpha}{2},\frac{-1}{2},\frac{-\alpha}{2}\right). \]
 The Lie superalgebra $D(2,1;\alpha)$ is simple unless $\alpha=0$ or $\alpha= -1$. Furthermore, we have the isomorphism
 \[ D(2,1;\alpha)\cong D(2,1;\beta)\]
  if and only if $\alpha$ and $\beta$ are in the same orbit under the transformations $\alpha \mapsto \alpha^{-1}$ and $\alpha \mapsto -1-\alpha$.

Consider the  matrices
\[
E_i= \left(\begin{array}{rr}
0 & 1 \\
0 & 0
\end{array}\right), \quad F_i =  \left(\begin{array}{rr}
0 & 0 \\
1 & 0
\end{array}\right),\quad  H_i=\left(\begin{array}{rr}
1 & 0 \\
0 & -1
\end{array}\right).
\] 
They give a realisation of $\mathfrak{sl}(V_i)$ where
 the vector space $V_i$ is given by
\[
u_{\plus}^i = (1,0)^t , \quad u_{\minus}^i = (0,1)^t.
\]
We also obtain $p$ from
\[
p_i(u_{\plus}^i,u_{\plus}^i)= 2 E_i, \quad p_i(u_{\plus}^i,u_{\minus}^i)=-H_i, \quad p_i(u_{\minus}^i,u_{\minus}^i)= -2 F_i.
\]
For the odd basis elements $u_{\pm}^1 \otimes u_{\pm}^2 \otimes u_{\pm}^3$ of $D(2,1;\alpha)$ we introduce a more compact notation
\[
u_{\pm\pm\pm} := u_{\pm}^1 \otimes u_{\pm}^2 \otimes u_{\pm}^3.
\]

We have the following realisation of  $\mathfrak{sl}(2)$ in $D(2,1,\alpha)$
\begin{align*}
\{ E_{2} + E_{3}, H_{2}+ H_{3}, F_{2}+F_{3} \}.
\end{align*}
The corresponding three-grading by the eigenspaces of ad$(H_{2}+H_{3})$ is given by

\begin{align} \label{Three grading}
\begin{aligned}
\g_{\plus} &= \{ E_{3} ,  E_{2}, u_{\minus\plus\plus} ,u_{\plus\plus\plus}\} \\
\g_{\minus} &=\{ F_3, F_2, u_{\plus\minus\minus}, u_{\minus\minus\minus} \}  \\
\g_0 &=  \{   H_1, H_2,H_3, E_1, F_1, u_{\minus\plus\minus},u_{\plus\plus\minus},, u_{\plus\minus\plus},u_{\minus\minus\plus} \}.
\end{aligned}
\end{align}

\subsection{The Fock representation}
In \cite{BC3} a Fock representation $\rol$ depending on a parameter $\lambda\in\{1,\alpha\}$ was constructed on the so called polynomial Fock space $F_\lambda$. We will briefly reconstruct it here and refer to \cite[Section 4.3]{BC3} for further details.
Note that from now on we will exclude  $\alpha=0$ and $\alpha=1$ since for that case the picture becomes quite different. Remark that $\alpha=1$ correspond to the non-deformed case $D(2,1,1)=\mathfrak{osp}(4|2),$ while for $\alpha=0$, the algebra $D(2,1,0)$ is not simple.  See \cite[Section 4.2]{BC3} for a more detailed explanation.

Let $z_1,z_2$ and $z_3,z_4$ be the even resp. odd representatives of the coordinates on $\mc{P} (\mathds{C}^{2 |2})$. We define
\begin{align*}
V_\alpha &:=\{ a(2z_1z_2+z_3z_4) + b{z_2^2} + c z_2z_3 +d z_2z_4 \mid a,b,c,d \in \mathds{K} \} \subset \mathcal{P}_2 & \text{if } \lambda&=\alpha \text{ and}\\
V_1 &:=\{ a (2\alpha z_1z_2+z_3z_4) + b z_1^2 + c z_1z_3 + d z_1z_4 \mid a,b,c,d \in \mathds{K} \} \subset \mathcal{P}_2 & \text{if } \lambda&=1.
\end{align*}

\begin{Def}\label{DefFock}
Suppose $\lambda\in\{1,\alpha\}$, then the \textbf{polynomial Fock space} is defined as the superspace
\begin{align*}
F_\lambda := \mathcal{P}(\C^{2|2})/ \mc I_\lambda,
\end{align*}
with $\mc I_\lambda :=  \mc{P}(\ds \C^{2|2}) V_\lambda$.
\end{Def}
As shown in \cite[Section 5.1]{BC3}, if $p\in F_\alpha$, then there exists $p_{i,k}\in \ds C$ such that
\begin{align*}
p = p_{1,0}+\sum_{k=1}^{\infty}z_1^{k-1}(p_{1,k}z_1+p_{2,k}z_2+p_{3,k}z_3+p_{4,k}z_4). 
\end{align*}

The explicit expression for the Fock representation $\rol$ on $F_\lambda$ is as follows.

For $\mf g_{\minus}$ we obtain
\begin{align*}
\rol(F_2) &= -\dfrac{\imath}{2}(z_1 + \bessel(z_1)) -\frac{\imath}{2}(-\lambda +2z_1\pt {z_1} + z_3 \pt {z_3} + z_4\pt {z_4}),\\
\rol(F_3) &= -\dfrac{\imath}{2}(z_2 + \bessel(z_2)) -\frac{\imath}{2}(-\dfrac{\lambda}{\alpha} +2z_2\pt {z_2} + z_3 \pt {z_3} + z_4\pt {z_4}),\\
\rol(u_{\minus\minus\minus}) &= \dfrac{\imath}{2}(z_3 + \bessel(z_3))+ \frac{\imath}{2}(z_3\pt {z_1} + 2\alpha z_2\pt {z_4} +z_3\pt {z_2} + 2 z_1\pt {z_4}),\\
\rol(u_{\plus\minus\minus}) &= \dfrac{\imath}{4}(z_4 + \bessel(z_4))+ \frac{\imath}{4}(z_4\pt {z_1} - 2\alpha z_2\pt {z_3} +z_4\pt {z_2} - 2 z_1\pt {z_3}).
\end{align*}
For $\mf g_{\plus}$ we have
\begin{align*}
\rol(E_2) &=  -\dfrac{\imath}{2}(z_1 + \bessel(z_1)) +\frac{\imath}{2}(-\lambda +2z_1\pt {z_1} + z_3 \pt {z_3} + z_4\pt {z_4}),\\
\rol(E_3) &=  -\dfrac{\imath}{2}(z_2 + \bessel(z_2)) +\frac{\imath}{2}(-\dfrac{\lambda}{\alpha} +2z_2\pt {z_2} + z_3 \pt {z_3} + z_4\pt {z_4}),\\
\rol(u_{\minus\plus\plus}) &= -\dfrac{\imath}{2}(z_3 + \bessel(z_3)) +\frac{\imath}{2}(z_3\pt {z_1} + 2\alpha z_2\pt {z_4} +z_3\pt {z_2} + 2 z_1\pt {z_4}),\\
\rol(u_{\plus\plus\plus}) &= -\dfrac{\imath}{4}(z_4 + \bessel(z_4)) +\frac{\imath}{4}(z_4\pt {z_1} - 2\alpha z_2\pt {z_3} + z_4\pt {z_2} - 2 z_1\pt {z_3}).
\end{align*}
For $\mf g_0$ we have
\begin{gather*}
\rol(F_1) = 2z_3\pt {z_4}, \quad \rol(E_1) = 2^{-1}z_4\pt {z_3},\quad \rol(H_1) = z_4\pt {z_4}-z_3\pt {z_3},\\
\rol(H_2) = z_1-\bessel(z_1), \quad \rol(H_3) = z_2-\bessel(z_2),\\
\rol(u_{\minus\minus\plus}+u_{\minus\plus\minus}) = -(z_3-\bessel(z_3)), \quad \rol(u_{\plus\minus\plus}+u_{\plus\plus\minus}) = -2^{-1} (z_4-\bessel(z_4)),\\
\rol(u_{\minus\minus\plus}-u_{\minus\plus\minus}) =  -z_3\pt {z_1} - 2\alpha z_2\pt {z_4} +z_3\pt {z_2} + 2 z_1\pt {z_4},\\
\rol(u_{\plus\minus\plus}-u_{\plus\plus\minus}) =  2^{-1}(-{z_4}\pt {z_1} + 2\alpha z_2\pt {z_3} + z_4\pt {z_2} - 2 z_1\pt {z_3}).
\end{gather*}

Here $\bessel(z_i)$ denotes the Bessel operator of $z_i$ and are expicitly given by
\begin{align*}
\bessel(z_1) &= (-\lambda+z_1\pt {z_1} + z_3\pt {z_3} + z_4\pt {z_4})\pt {z_1} -2\alpha {z_2} \pt {z_3} \pt {z_4}, \\
\bessel(z_2) &= (-\dfrac{\lambda}{\alpha}+{z_2}\pt {z_2} + z_3\pt {z_3} + z_4\pt {z_4})\pt {z_2} -2 z_1 \pt {z_3} \pt {z_4},\\
\bessel(z_3) &= (-2\lambda + 2z_1\pt {z_1} + 2\alpha {z_2}\pt {z_2} + 2(1+\alpha)z_3\pt {z_3})\pt {z_4} + {z_3} \pt {z_1} \pt {z_2},\\
\bessel(z_4) &= (2\lambda - 2z_1\pt {z_1} - 2\alpha {z_2}\pt {z_2} - 2(1+\alpha)z_4\pt {z_4})\pt {z_3} + z_4 \pt {z_1} \pt {z_2}.
\end{align*}

\subsection{Additional representations}\label{ssAddRepr}

As mentioned in Section \ref{ssConstruct}, we have the isomorphisms $D(2,1;\alpha)\cong D(2,1;\beta)$ if and only if $\beta$ is in the same orbit as $\alpha$ under the transformations $\alpha \mapsto \alpha^{-1}$ and $\alpha \mapsto -1-\alpha$, i.e.,
\begin{align*}
\beta\in \{\alpha, -1-\alpha, -1-\alpha^{-1}, \alpha^{-1}, (-1-\alpha)^{-1}, (-1-\alpha^{-1})^{-1}\}
\end{align*}
These isomorphisms give rise to additional representations of $D(2,1;\alpha)$. A straightforward verification shows the isomorphism between $D(2,1;\alpha)$ and $D(2,1;\alpha^{-1})$ respects the three grading introduced in Equation \ref{Three grading}, while the isomorphism between $D(2,1;\alpha)$ and $D(2,1;-1-\alpha)$, in general, does not.

Let $\rol^{\alpha}_\lambda$ denote the Fock representation corresponding to $D(2,1;\alpha)$ with parameter $\lambda\in\{1,\alpha\}$. The isomorphism between $D(2,1;\alpha)$ and $D(2,1;\alpha^{-1})$ induces an equivalence between $\rol^\alpha_\lambda$ with parameter $\lambda=\alpha$ and $\rol^{\alpha^{-1}}_\lambda$ with $\lambda = 1$. Therefore, without loss of generality, we may choose $\lambda = \alpha$ and set $\rol^\alpha :=\rol^\alpha_\alpha$ . 

For an arbitrary $\alpha$ we now find that precomposing $\rol^\beta$ with the isomorphism $D(2,1;\alpha) \to D(2,1;\beta)$ for all possible values of $\beta$ gives us
\begin{align*}
&&\rol^\alpha, &&\rol^{-1-\alpha}, &&\rol^{-1-\alpha^{-1}} &&\rol^{\alpha^{-1}}, &&\rol^{(-1-\alpha)^{-1}}, &&\rol^{(-1-\alpha^{-1})^{-1}},
\end{align*}
which are all possibly distinct representations of $D(2,1;\alpha)$.

\section{The Lie supergroup $\ds D(2,1;\alpha)$}
\label{Section group}
In this section we define the supergroup $\ds D(2,1;\alpha)$ which has $D(2,1;\alpha)$ as its Lie superalgebra. We also give some basic results of $SL(V)$, which we will need later on. Note that in this section we will work  over the field $\ds R$ of real numbers. 

\subsection{Definition of $\ds D(2,1;\alpha)$}

We will use the characterisation of Lie supergroups based on pairs, see for example \cite[Chapter 7]{CCF} for more details.
\begin{Def}
A Lie supergroup $G$ is a pair $(G_0, \mf g)$ together with a morphism $\sigma : G_0\rightarrow \End(\mf g)$ where $G_0$ is a Lie group and $\mf g$ is a Lie superalgebra for which
\begin{itemize}
\item $\Lie(G_0)\cong \mf g_{\ol 0}$.
\item For all $g\in G_0$ we have $\left.\sigma(g)\right|_{\mf g_{\ol 0}} = \Ad(g)$, where $\Ad$ is the adjoint representation of $G_0$ on $\mf g_{\ol 0}$.
\item For all $X\in \mf g_{\ol 0}$ and $Y\in \mf g$ we have
\begin{align*}
d\sigma(X)Y= \left.\dfrac{d}{dt}\sigma(\exp(tX))Y\right|_{t=0}=[X,Y].
\end{align*}
\end{itemize}
Since $\sigma$ extends the adjoint representation of $G_0$ on $\mf g_{\ol 0}$ we call it the \textbf{adjoint representation} of $G_0$ on $\mf g$ and denote it by $\Ad$. 
\end{Def}
Note that these Lie supergroups are called super Harish-Chandra pairs in \cite{dGM}. The term Lie supergroup is then used for a supermanifold endowed with a group structure for which the multiplication is a smooth map. However, as is mentioned in \cite{dGM} these two structures are categorically equivalent.

Recall $\mf g = D(2,1;\alpha)$ and define $G_0 := SL(V_1)\times SL(V_2)\times SL(V_3)$, where $V_i$ is a copy of the two dimensional vector space $V$ with basis $u_+$ and $u_-$. Then $\ds D(2,1;\alpha) := (G_0, \mf g)$ is a Lie supergroup if we extend the adjoint representation as follows. For $A_i\in SL(V_i)$ and $x\otimes y \otimes z\in \mf g_{\ol 1} = V_1\otimes V_2\otimes V_3$ we define
\[
\Ad(A_1,A_2,A_3) x\otimes y \otimes z := A_1(x)\otimes A_2(y) \otimes A_3(z)
\]
and for $X_i \in \{H_i,E_i,F_i\}$ we define
\[
\Ad(A_1,A_2,A_3) X_i := A_iX_iA_i^{-1}
\]
and extend it linearly.

\subsection{Properties of $SL(V)$}

Define the following one-dimensional subgroups of $SL(V_i)$ for $i\in\{1,2,3\}$
\begin{align*}
K_i &:= \left\lbrace K_i(k_i) := \left(\begin{matrix}
\cos(k_i) & -\sin(k_i)\\
\sin(k_i) & \cos(k_i)
\end{matrix}\right) \mid k_i \in \ds R \right\rbrace,\\
 A_i &:= \left\lbrace A_i(a_i) := \left(\begin{matrix}
\exp(a_i) & 0\\
0 & \exp(-a_i)
\end{matrix}\right) \mid a_i \in \ds R \right\rbrace.
\end{align*}
 On the one hand we have the Cartan decomposition of $SL(V_i)$.
\begin{theorem}[Cartan decomposition]
We have a decomposition $SL(V) = KAK$, i.e., every $g\in \mf{sl}(V)$ can be written as $g = kak'$ with $k, k'\in K$ and $a\in A$.
\end{theorem}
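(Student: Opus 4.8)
The statement is the classical $KAK$ (Cartan) decomposition for $SL(2,\R)$, so the plan is to give a short, self-contained argument rather than invoke heavy Lie-theoretic machinery. The key observation is that $K = SO(2)$ is the maximal compact subgroup and $A$ consists of the positive diagonal matrices. First I would reduce to a polar-type statement: given $g \in SL(V)\cong SL(2,\R)$, consider the symmetric positive-definite matrix $g^t g$, which has determinant $1$. By the spectral theorem there is $k' \in SO(2)$ with $(k')^{-1}(g^tg)k' = a^2$ a diagonal matrix with positive entries whose product is $1$; write $a \in A$ for its positive square root (also determinant $1$).

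Next I would set $h := g (k')(a)^{-1}$ and check that $h \in SO(2)$: indeed $h^t h = (a^{-1})^t (k')^t g^t g (k')(a^{-1}) = a^{-1}(a^2)a^{-1} = I$, and $\det h = 1$, so $h \in SO(2) = K$. Renaming $k := h$ we obtain $g = k\, a\, k'$ with $k,k' \in K$ and $a \in A$, which is exactly the claimed decomposition. (The statement in the excerpt writes ``$g \in \mf{sl}(V)$'' but clearly means $g \in SL(V)$ at the group level; the argument is at the group level throughout.)

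The only mild subtlety — and the step I would flag as the ``main obstacle,'' though it is routine — is ensuring all the ambient determinant constraints are respected: that the orthogonal diagonalizing matrix $k'$ can be chosen in $SO(2)$ rather than just $O(2)$ (if the naive eigenvector matrix has determinant $-1$, swap the two eigenvectors, or equivalently multiply by $\mathrm{diag}(1,-1)$ and absorb the sign), and that the positive square root $a$ of $a^2$ genuinely lies in $A$, i.e. has determinant $1$ — which holds since $\det(a^2) = \det(g^tg) = 1$ forces the two positive eigenvalues to be reciprocal. With these bookkeeping points handled, the decomposition follows immediately from the spectral theorem for $2\times 2$ symmetric matrices. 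Alternatively one could cite the general Cartan decomposition for semisimple Lie groups, but the direct $2\times 2$ computation is cleaner here and keeps the paper self-contained.
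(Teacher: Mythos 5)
Your proof is correct. Note that the paper itself offers no proof of this theorem: it is stated as the classical Cartan ($KAK$) decomposition of $SL(2,\R)$ and immediately used, so there is no argument of the authors to compare against; your spectral-theorem/polar-decomposition computation is a standard and perfectly adequate way to make the statement self-contained, and you handle the two genuine bookkeeping points (choosing the orthogonal diagonalizer in $SO(2)$ rather than merely $O(2)$, and checking that the positive square root has determinant $1$ so that it lies in $A$) correctly; you also rightly observe that ``$g\in\mf{sl}(V)$'' in the statement is a typo for $g\in SL(V)$. One small slip to fix: with your conventions $(k')^{-1}(g^tg)k'=a^2$ and $h:=g\,k'\,a^{-1}$, what you actually obtain is $g=h\,a\,(k')^{-1}$, so the right-hand factor is $(k')^{-1}$ rather than $k'$; since $K=SO(2)$ is a group this is only a relabeling and does not affect the validity of the decomposition, but the final line should say so explicitly.
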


This decomposition implies that a representation of $SL(V_i)$ is fully determined by its restriction to $K_i$ and $A_i$. On the other hand we have an explicit integration of $\mf{sl}(V_i)$ to $SL(V_i)$.

\begin{lemma}\label{anticompow}
Suppose $A$, $B$ and $C$ are three anticommuting variables. Then
\begin{align*}
(A+B+C)^{2j} = \sum_{a+b+c=j}\binom{j}{a,b,c}A^{2a}B^{2b}C^{2c},
\end{align*}
for all $j\in \ds N$.
\end{lemma}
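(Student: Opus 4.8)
The plan is to prove the multinomial-type identity
\[
(A+B+C)^{2j} = \sum_{a+b+c=j}\binom{j}{a,b,c}A^{2a}B^{2b}C^{2c}
\]
for pairwise anticommuting variables $A,B,C$ by first recording the elementary consequences of anticommutativity, and then proceeding by induction on $j$.

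First I would observe the basic facts that follow from $AB=-BA$, $BC=-CB$, $AC=-CA$: each square $A^2$, $B^2$, $C^2$ is central in the algebra generated by $A,B,C$ (since $A^2 B = A(AB) = -A(BA) = -(AB)A = (BA)A = BA^2$, and likewise for the other pairs), and moreover $(A+B+C)^2 = A^2 + B^2 + C^2 + (AB+BA) + (BC+CB) + (AC+CA) = A^2+B^2+C^2$. This last computation is the conceptual heart of the statement: the odd cross terms cancel, leaving a genuinely commuting sum $S := A^2+B^2+C^2$ of three central elements.

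Given that, the identity becomes the ordinary trinomial theorem applied to $S$. Concretely, I would argue by induction on $j$: the base case $j=0$ is trivial, and for the inductive step write $(A+B+C)^{2(j+1)} = \bigl((A+B+C)^{2j}\bigr)(A+B+C)^2 = \left(\sum_{a+b+c=j}\binom{j}{a,b,c}A^{2a}B^{2b}C^{2c}\right)(A^2+B^2+C^2)$, then distribute and re-collect. Because $A^2,B^2,C^2$ are central, multiplying a monomial $A^{2a}B^{2b}C^{2c}$ by $A^2$, $B^2$, or $C^2$ simply increments one of the exponents while keeping the monomial in the standard ordered form, and the resulting coefficients recombine via the Pascal-type recursion $\binom{j+1}{a,b,c} = \binom{j}{a-1,b,c} + \binom{j}{a,b-1,c} + \binom{j}{a,b,c-1}$, which is exactly the recursion for multinomial coefficients. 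Alternatively, one can avoid induction entirely: once $(A+B+C)^{2j} = S^j$ is established (immediate from $(A+B+C)^2=S$ and associativity), the claim is literally the multinomial theorem $S^j = (A^2+B^2+C^2)^j = \sum_{a+b+c=j}\binom{j}{a,b,c}(A^2)^a(B^2)^b(C^2)^c$ in the commutative subalgebra generated by $A^2,B^2,C^2$.

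I do not expect any serious obstacle here; the only point requiring care is the bookkeeping that justifies treating $A^2,B^2,C^2$ as commuting indeterminates — i.e., verifying centrality of the squares and the vanishing of the cross terms in $(A+B+C)^2$ — after which everything reduces to the classical trinomial expansion. The mildly delicate part is simply to state clearly that "anticommuting variables" are being manipulated inside an associative (super)algebra so that these rearrangements are legitimate, and that the ordered monomials $A^{2a}B^{2b}C^{2c}$ form the natural basis in which to express the answer.
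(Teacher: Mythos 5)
Your proposal is correct and follows essentially the same route as the paper: reduce to the observation that $(A+B+C)^2 = A^2+B^2+C^2$ is a sum of commuting elements and then invoke the multinomial theorem. The extra detail you supply (centrality of the squares, the optional inductive bookkeeping) just fleshes out what the paper leaves as ``follows immediately.''
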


\begin{proof}
This follows immediately from the multinomial theorem and the fact that $(A+B+C)^2 = A^2+B^2+C^2$ is a sum of three commuting variables.
\end{proof}

\begin{theorem}
Suppose $g\in SL(V_i)$. There exists an $X\in\Lie(SL(V_i))$ such that $g = \exp(X)$ if and only if $g$ is the identity or
\begin{align*}
g = \left(\begin{matrix}
\cosh(\rho)+a\rho^{-1}\sinh(\rho) & (l-k)\rho^{-1}\sinh(\rho)\\
(l+k)\rho^{-1}\sinh(\rho) & \cosh(\rho)-a\rho^{-1}\sinh(\rho)
\end{matrix}\right),
\end{align*}
for some $a,k,l\in \ds R$ such that $\rho := \sqrt{a^2+l^2-k^2}\neq 0$. In this case we have $X = k(F_i-E_i) + a H_i + l(F_i+E_i)$.
\end{theorem}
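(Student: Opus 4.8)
The plan is to compute directly the exponential of a general element $X \in \mathfrak{sl}(V_i)$ and read off the normal form. Write $X = k(F_i - E_i) + aH_i + l(F_i + E_i)$, which is the general traceless $2\times 2$ real matrix
\begin{align*}
X = \begin{pmatrix} a & l - k \\ l + k & -a \end{pmatrix}.
\end{align*}
The Cayley--Hamilton identity gives $X^2 = (a^2 + l^2 - k^2)\, I = \rho^2 I$, where $\rho := \sqrt{a^2 + l^2 - k^2}$ (allowing $\rho$ to be real, zero, or purely imaginary). First I would treat the generic case $\rho \neq 0$ (real): splitting the exponential series into even and odd powers and using $X^{2j} = \rho^{2j} I$ and $X^{2j+1} = \rho^{2j} X$ yields
\begin{align*}
\exp(X) = \cosh(\rho)\, I + \frac{\sinh(\rho)}{\rho}\, X,
\end{align*}
and substituting the matrix entries of $X$ produces exactly the displayed form. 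This shows that every $X$ with $\rho \neq 0$ exponentiates to a matrix of the claimed shape, and that $X = k(F_i - E_i) + aH_i + l(F_i+E_i)$ recovers the parameters $(a,k,l)$ from $g$.

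Next I would address the two degenerate cases. If $\rho = 0$ but $X \neq 0$, then $X^2 = 0$, so $\exp(X) = I + X$, a nontrivial unipotent matrix with trace $2$ and $(\text{entry})_{11} + (\text{entry})_{22} = 2$; such a matrix is \emph{not} of the displayed form (the displayed form requires $\rho \neq 0$), so these elements must be accounted for separately. However — and this is the subtle point the statement is encoding — an element $g = I + N$ with $N \neq 0$ nilpotent can sometimes \emph{also} be written as $\exp(X')$ for a different $X'$ with $\rho' \neq 0$; I would check whether the displayed family, as $\rho \to 0$ along imaginary values, already captures all such unipotents, or whether the statement is genuinely asserting that the non-identity unipotents of $SL(2,\mathbb{R})$ are \emph{not} in the image of $\exp$ restricted to the relevant locus. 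In fact the correct reading is: the image of $\exp$ consists of the identity together with all matrices of the displayed form with $\rho \neq 0$ real, because when $a^2 + l^2 - k^2 < 0$ one gets trigonometric functions (conjugates of rotations, trace in $(-2,2)$), when $> 0$ one gets the hyperbolic form with trace $> 2$, and the unipotent elements with trace exactly $2$ (other than $I$) are genuinely omitted. So I must verify the forward direction carefully: if $g = \exp(X)$ with $X \neq 0$ and $X^2 = 0$, I would show that $g = I + X$ is simultaneously expressible in the displayed form — which it is \emph{not} — hence concluding the statement is actually that $\exp$ misses these unipotents, and the "if and only if" is: $g \in \exp(\mathfrak{sl})$ iff $g = I$ or $g$ has the displayed form with real $\rho \neq 0$.

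For the converse direction I would argue that any $g$ of the displayed form arises as $\exp(X)$ with the stated $X$: given $a, k, l$ with $\rho \neq 0$, set $X = k(F_i - E_i) + aH_i + l(F_i + E_i)$ and run the computation above to confirm $\exp(X) = g$. The remaining bookkeeping is to note $\exp(0) = I$, handling the identity case, and to double-check that the map $(a,k,l) \mapsto g$ is consistent with the entries as written (in particular the off-diagonal entries $(l-k)\rho^{-1}\sinh\rho$ and $(l+k)\rho^{-1}\sinh\rho$ match $X_{12}$ and $X_{21}$ scaled by $\sinh(\rho)/\rho$). The \textbf{main obstacle} is the careful treatment of the non-identity unipotent (parabolic) elements: one has to be precise that these lie \emph{outside} the image of the exponential map of $SL(2,\mathbb{R})$, which is exactly why the theorem's normal form requires $\rho \neq 0$ and why "identity or (displayed form)" is not redundant — every detail of the "only if" direction hinges on ruling out the $\rho = 0$, $X \neq 0$ possibility for $g$ itself. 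The rest is the routine even/odd series splitting enabled by Cayley--Hamilton, which I would not belabour.
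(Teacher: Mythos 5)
Your computation in the generic case is exactly the paper's: writing $X=k(F_i-E_i)+aH_i+l(F_i+E_i)$ and using $X^2=\rho^2 I$ (the paper obtains the same identity by noting that $F_i-E_i$, $H_i$, $F_i+E_i$ anticommute and square to $\pm I$, via its lemma on anticommuting variables, Lemma~\ref{anticompow}), one splits the exponential series into even and odd parts and gets $\exp(X)=\cosh(\rho)I+\rho^{-1}\sinh(\rho)X$ for $\rho\neq 0$, which is the displayed matrix; Cayley--Hamilton is just a shortcut for the paper's multinomial argument, so up to this point the two proofs coincide.

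The genuine problem is your treatment of $\rho=0$, $X\neq 0$, which is self-contradictory. You correctly compute $\exp(X)=I+X$ there, so every non-identity unipotent element \emph{is} in the image of $\exp$ (it is the exponential of a nonzero nilpotent element); yet you then assert that these unipotents are ``genuinely omitted'' from the image and propose to finish the ``only if'' direction by showing that $\exp$ misses them. That cannot work: $\exp(E_i)=I+E_i$ is neither the identity nor of the displayed form with $\rho\neq 0$ (matching entries would force $l=-k$ and $a=0$, hence $\rho^2=a^2+l^2-k^2=0$), so the forward implication, read literally, fails and no argument will rescue it. You have in fact put your finger on a defect shared by the paper's own proof, which ends with the claim that for $\rho=0$ the calculation gives $\exp(X)=I$; it actually gives $\exp(X)=I+X$, and the image of $\exp$ on $\mathfrak{sl}(2,\ds R)$ consists of the matrices of the displayed form with $\rho\neq 0$ \emph{together with} all unipotent elements $I+N$ (equivalently $\{g:\operatorname{tr} g>-2\}\cup\{-I\}$). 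A correct write-up must either add these unipotent elements to the right-hand side of the equivalence or prove only the ``if'' direction, which is all that is needed for the paper's later use (the explicit integration on $K_i$ and $A_i$). A minor further slip: in the oscillatory case $\rho^2<0$ the trace is $2\cos|\rho|\in[-2,2]$, so for instance $-I$ (take $\rho=\imath\pi$) is of the displayed form; your claim that the trace lies in the open interval $(-2,2)$ is not quite right.
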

\begin{proof}
Any element $X\in \mf{sl}(V_i)$ can be written as $X=k(F_i-E_i) + a H_i + l(F_i+E_i)$ for $a,k,l\in \ds R$. We will calculate $\exp(X)$ explicitly. Note that $F_i-E_i$, $H_i$ and $F_i+E_i$ anticommute with each other and $(\imath(F_i-E_i))^2=H_i^2=(F_i+E_i)^2=I$. Using Lemma \ref{anticompow} we find
\begin{align*}
\exp(X) &= \sum_{j=0}^{\infty}\dfrac{1}{j!}(k(F_i-E_i) + a H_i + l(F_i+E_i))^j\\
&= \sum_{j=0}^{\infty}\dfrac{1}{(2j)!}(k(F_i-E_i) + a H_i + l(F_i+E_i))^{2j}\\
&\quad+ (k(F_i-E_i) + a H_i + l(F_i+E_i))\\
&\quad \times\sum_{j=0}^{\infty}\dfrac{1}{(2j+1)!}(k(F_i-E_i) + a H_i + l(F_i+E_i))^{2j}\\
&= \sum_{j=0}^{\infty}\sum_{u+v+w=j}\dfrac{k^{2u}a^{2v}l^{2w}j!}{(2j)!u!v!w!}(F_i-E_i)^{2u}H_i^{2v}(F_i+E_i)^{2w}\\
&\quad+ (k(F_i-E_i) + a H_i + l(F_i+E_i))\\
&\quad\times \sum_{j=0}^{\infty}\sum_{u+v+w=j}\dfrac{k^{2u}a^{2v}l^{2w}j!}{(2j+1)!u!v!w!}(F_i-E_i)^{2u}H_i^{2v}(F_i+E_i)^{2w}\\
&= I\sum_{j=0}^{\infty}\sum_{u+v+w=j}\dfrac{(\imath k)^{2u}a^{2v}l^{2w}j!}{(2j)!u!v!w!}\\
&\quad+ (k(F_i-E_i) + a H_i + l(F_i+E_i))\sum_{j=0}^{\infty}\sum_{u+v+w=j}\dfrac{(\imath k)^{2u}a^{2v}l^{2w}j!}{(2j+1)!u!v!w!}\\
&= \cosh(\rho)I + (k(F_i-E_i) + a H_i + l(F_i+E_i))\rho^{-1}\sinh(\rho),
\end{align*}
for $\rho\neq 0$. For $\rho=0$ this calculation gives us $\exp(X) =I$.
\end{proof}

Note that in particular, we have
\begin{align*}
K_i &= \{\exp(k_i(F_i-E_i)) \mid k_i \in \ds R \}, & A_i &= \{\exp(a_i H_i)\mid a_i \in \ds R  \}.
\end{align*}
This implies that from an explicit representation of $\mf{sl}(V_i)$ we can obtain an explicit action of elements in $K_i$ and $A_i$ when integrated to the group level. Because of the Cartan decomposition this then defines an action of $SL(V_i)$.

Since we can write every element of $SL(V_i)$ as a finite product of exponentials of elements of $\mf {sl}(V_i)$, we obtain the following corollary for $D(2,1;\alpha)$.
\begin{Cor}\label{CorProd}
Every element of $G_0=SL(V_1)\times SL(V_2)\times SL(V_3)$ can be written as a finite product of exponentials of elements of $\mf g_{\ol 0}$, i.e., for all $g \in G_0$ we have
\begin{align*}
g = \prod_{i=1}^n \exp(X_i),
\end{align*}
for some $X_i\in \mf g_{\ol 0}$ and $n\in \ds N$.
\end{Cor}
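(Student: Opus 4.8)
The plan is to reduce the statement to a single tensor factor and then combine the Cartan ($KAK$) decomposition with the explicit one-parameter subgroups $K_i=\{\exp(k_i(F_i-E_i))\mid k_i\in\ds R\}$ and $A_i=\{\exp(a_iH_i)\mid a_i\in\ds R\}$ recorded just above. Since $G_0=SL(V_1)\times SL(V_2)\times SL(V_3)$ is a direct product, any $g=(g_1,g_2,g_3)\in G_0$ factors as $g=(g_1,I,I)\,(I,g_2,I)\,(I,I,g_3)$, and each $\mf{sl}(V_i)$ sits inside $\g_{\oa}=\mf{sp}(\psi_1)\oplus\mf{sp}(\psi_2)\oplus\mf{sp}(\psi_3)$ as the $i$-th summand (zero in the other two slots); on this summand the exponential map of the Lie group $G_0$, whose Lie algebra is $\g_{\oa}$, restricts to the exponential map of the factor $SL(V_i)$, with image inside $G_0$. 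Hence it suffices to show that every $g_i\in SL(V_i)$ is a finite product of exponentials of elements of $\mf{sl}(V_i)$.

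For this I would apply the Cartan decomposition $SL(V_i)=K_iA_iK_i$ to write $g_i=k\,a\,k'$ with $k,k'\in K_i$ and $a\in A_i$. By the two displayed identities above, $k=\exp(s(F_i-E_i))$, $a=\exp(tH_i)$ and $k'=\exp(s'(F_i-E_i))$ for suitable real numbers $s,t,s'$, and $F_i-E_i,H_i\in\mf{sl}(V_i)$; thus $g_i$ is a product of three exponentials of elements of $\mf{sl}(V_i)$. Tracing back through the factorization of $g$, every element of $G_0$ is therefore a product of at most nine exponentials of elements of $\g_{\oa}$, which is the assertion (with $n\le 9$).

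The main --- in fact the only --- point worth flagging is that one cannot expect to write $g_i$ as a single exponential, since, as the preceding theorem shows, the exponential map of $SL(V_i)$ is not surjective; it is precisely the $KAK$ factorization, together with the identification of $K_i$ and $A_i$ as one-parameter exponential subgroups, that repairs this. Beyond those two consequences the explicit integration formula from the previous theorem is not needed, and no further work is required.
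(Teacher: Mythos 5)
Your proposal is correct and follows essentially the same route the paper takes: the paper derives the corollary exactly from the Cartan decomposition $SL(V_i)=K_iA_iK_i$ together with the identifications $K_i=\{\exp(k_i(F_i-E_i))\}$ and $A_i=\{\exp(a_iH_i)\}$, applied factorwise to the direct product $G_0$. Your additional remarks (the bound $n\le 9$ and the observation that a single exponential does not suffice because $\exp$ is not surjective on $SL(V_i)$) are consistent with, and slightly more explicit than, the paper's one-line justification.
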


\section{The Fock space $\Fock$}
\label{Section Fock space}

In this section we introduce the notion of a Hilbert superspace as defined in \cite{dGM}. We also extend the polynomial Fock space $F_\lambda$ to the Fock space $\Fock$ and show it is such a Hilbert superspace when combined with the Bessel-Fischer product.

From now on we will restrict ourselves to the case $\alpha\in \ds R\setminus \ds N$ since only then the Bessel-Fischer product will be non-degenerate. Furthermore, we also choose $\lambda=\alpha$ and denote the polynomial Fock space $F_\lambda$ by $\pFock$. Recall from Subsection \ref{ssAddRepr} that the case $\lambda=1$ is always equivalent to a representation with $\lambda=\alpha$.

\subsection{The Bessel-Fischer product}
In \cite[Section 5]{BC3}, a non-degenerate, sesquilinear, superhermitian form on $\pFock$ was introduced. This product is a generalization of the Bessel-Fischer inner product on the polynomial space $\mc P(\C^m)$ considered in \cite[Section 2.3]{HKMO}.
\begin{Def}
For $p, q\in \pFock$ we define the \textbf{Bessel-Fischer product} of $p$ and $q$ as
\begin{align*}
\bfip{p,q} := \left. p(\bessel)\bar q(z)\right|_{z=0},
\end{align*}
where $\bar q(z) = \overline{q(\bar z)}$ is obtained by conjugating the coefficients of the polynomial $q$ and $p(\bessel)$ is obtained by replacing $z_i$ by $\bessel (z_i)$.
\end{Def}

From \cite[Proposition 5.6.]{BC3} we obtain the following explicit form of the Bessel-Fischer product.

\begin{Prop}\label{PropBFPexplicit}
Suppose $p,q\in \{z_1^{k}, z_1^kz_2,z_1^kz_3, z_1^kz_4\}$, with $k\in\N$. Then the only non-zero evaluations of $\bfip{p,q}$ are
\begin{align*}
\bfip{z_1^{k}, z_1^{k}} &= -\bfip{z_1^{k}z_2, z_1^{k}z_2} = k!(-\alpha)_{k},\\
\bfip{z_1^{k}z_3, z_1^{k}z_4} &= -\bfip{z_1^{k}z_4, z_1^{k}z_3}  = 2k!(-\alpha)_{k+1},
\end{align*}
where we used the Pochhammer symbol $(a)_k = a(a+1)(a+2)\cdots (a+k-1)$.
\end{Prop}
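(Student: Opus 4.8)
The plan is to compute the Bessel–Fischer product directly from its definition $\bfip{p,q} = p(\bessel)\bar q(z)\big|_{z=0}$, exploiting the fact that the relevant monomials $z_1^k$, $z_1^kz_2$, $z_1^kz_3$, $z_1^kz_4$ all have real coefficients, so $\bar q = q$ and the product reduces to applying the differential operator $p(\bessel)$ — obtained by substituting the Bessel operators $\bessel(z_i)$ for the variables $z_i$ — to the polynomial $q$ and evaluating at the origin. Since $\bfip{\cdot,\cdot}$ is sesquilinear and superhermitian, it suffices to compute $\bfip{z_1^kz_i, z_1^\ell z_j}$ for the listed generators, and a parity/grading argument (the grading by total degree, or equivalently the eigenvalue of $\rol(H_2) = z_1 - \bessel(z_1)$ paired with bookkeeping on the $z_3,z_4$ degrees) shows most of these vanish: applying $\bessel(z_1)^k\bessel(z_i)$ to $z_1^\ell z_j$ and evaluating at $0$ can only survive when the total degrees match and the "odd content" matches, forcing $k=\ell$ and $i=j$ for the diagonal-type terms and $\{i,j\}=\{3,4\}$ for the off-diagonal ones.

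The computational heart is an induction on $k$. First I would establish the base cases $k=0$: a short direct calculation gives $\bfip{1,1}=1 = 0!\,(-\alpha)_0$, $\bfip{z_2,z_2} = \bessel(z_2)\,z_2\big|_{z=0}$, which from the explicit formula for $\bessel(z_2)$ equals $-\lambda/\alpha = -1$ (using $\lambda=\alpha$) $= -0!\,(-\alpha)_0$, and $\bfip{z_3,z_4} = \bessel(z_3)\,z_4\big|_{z=0} = -2\lambda = -2\alpha = 2\cdot 0!\,(-\alpha)_1$, with the antisymmetry $\bfip{z_3,z_4}=-\bfip{z_4,z_3}$ coming from superhermiticity since $z_3,z_4$ are odd. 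For the inductive step I would use that $\bessel(z_1)$ acting on $z_1^k$ (and the mixed monomials) produces lower-degree terms with computable coefficients: from $\bessel(z_1) = (-\lambda + z_1\pt{z_1} + z_3\pt{z_3} + z_4\pt{z_4})\pt{z_1} - 2\alpha z_2\pt{z_3}\pt{z_4}$ one gets $\bessel(z_1)(z_1^k) = k(k-1-\lambda) z_1^{k-1}$ on the nose, and similar identities for $z_1^kz_2$, $z_1^kz_3$, $z_1^kz_4$ (the second term contributes only to the $z_2 z_3 z_4$-type pieces). Feeding these recursions into $\bfip{z_1^k z_i, z_1^k z_i} = \bfip{z_1^{k-1}z_i,\, \bessel(z_1)(z_1^k z_i)}$ (moving one $\bessel(z_1)$ across, which is legitimate by the definition since $p(\bessel)$ is literally the operator) yields a first-order recursion in $k$ whose solution is the stated Pochhammer expression; the factor $(-\alpha)_k$ versus $(-\alpha)_{k+1}$ for the $z_3,z_4$ case is exactly accounted for by the shift $-\lambda \to -2\lambda$ and the extra odd variables in $\bessel(z_3),\bessel(z_4)$.

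I expect the main obstacle to be the careful bookkeeping of \emph{which} cross terms vanish and \emph{why} — in particular verifying rigorously that $p(\bessel)q\big|_{z=0}$ picks out only the claimed pairings. The cleanest way to handle this is to observe that $\rol$ is a representation and $\rol(H_2) = z_1 - \bessel(z_1)$, $\rol(H_3) = z_2 - \bessel(z_2)$, $\rol(H_1) = z_4\pt{z_4} - z_3\pt{z_3}$ act diagonally (or near-diagonally) on the monomial basis of $\pFock$ described in the excerpt, so that $\bfip{\cdot,\cdot}$ is forced to be orthogonal across distinct joint eigenspaces; combined with the known basis $p = p_{1,0} + \sum_k z_1^{k-1}(p_{1,k}z_1 + p_{2,k}z_2 + p_{3,k}z_3 + p_{4,k}z_4)$ of $F_\alpha$, this reduces everything to the finitely many "resonant" pairs and the single-variable recursion above. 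A secondary subtlety is keeping the signs straight for the odd generators $z_3, z_4$ under superhermiticity, which dictates the antisymmetry $\bfip{z_1^kz_3,z_1^kz_4} = -\bfip{z_1^kz_4,z_1^kz_3}$ and the minus sign in $\bfip{z_1^kz_2,z_1^kz_2} = -\bfip{z_1^k,z_1^k}$; these I would pin down by tracking the Koszul sign in $\overline{\bfip{p,q}} = (-1)^{|p||q|}\bfip{q,p}$. Everything else is a routine, if somewhat lengthy, computation with the explicit Bessel operators, and I would present only the representative cases $z_1^k$ and $z_1^kz_3$ in full, indicating that the other two are entirely analogous.
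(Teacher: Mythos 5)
The paper itself gives no argument for this proposition: it is quoted verbatim from \cite[Proposition 5.6]{BC3}, so the only thing to compare your proposal against is the definition-level computation that BC3 carries out, and your plan is essentially that computation and is correct in its core. Each Bessel operator is homogeneous of total degree $-1$, so $\bfip{p,q}=0$ unless $\deg p=\deg q$; the recursions $\bessel(z_1)(z_1^k)=k(k-1-\alpha)z_1^{k-1}$, $\bessel(z_1)(z_1^kz_2)=k(k-1-\alpha)z_1^{k-1}z_2$, $\bessel(z_1)(z_1^kz_j)=k(k-\alpha)z_1^{k-1}z_j$ for $j\in\{3,4\}$, together with the base values $\bfip{1,1}=1$, $\bfip{z_2,z_2}=-1$, $\bfip{z_3,z_4}=-2\alpha$, do produce exactly $k!(-\alpha)_k$, $-k!(-\alpha)_k$ and $2k!(-\alpha)_{k+1}$, and superhermitianity gives the sign flip for $\bfip{z_1^kz_4,z_1^kz_3}$. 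Note that ``moving one $\bessel(z_1)$ across'' silently uses that the Bessel operators (super)commute, so that $p(\bessel)$ is unambiguous; this is proved in \cite{BC3} and should be cited or checked.

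Two points in your vanishing argument are overstated and should be repaired, though both are easy. First, degree and odd-variable parity alone do \emph{not} force the surviving pairs to be the diagonal ones and $\{i,j\}=\{3,4\}$: the pairs $(z_1^kz_3,z_1^kz_3)$, $(z_1^kz_4,z_1^kz_4)$ and the even pairs of type $(z_1^{k+1},z_1^kz_2)$ pass both tests and are only killed by the one-line evaluations $\bessel(z_3)z_3\big|_{z=0}=\bessel(z_4)z_4\big|_{z=0}=0$, $\bessel(z_1)z_2=0$ and $\bessel(z_2)z_1^{k+1}=0$; these cases must be listed explicitly. Second, the proposed ``cleanest'' shortcut via joint eigenspaces of $\rol(H_2)=z_1-\bessel(z_1)$, $\rol(H_3)=z_2-\bessel(z_2)$ does not work as stated, since these operators are not diagonal (nor triangular in a useful sense) on the monomial basis, so no eigenspace-orthogonality argument is available from them; the degree/parity counting you already invoke, supplemented by the explicit low-degree evaluations above, is what actually closes the argument.
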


From this explicit form we can easily see that the Bessel-Fischer product is degenerate if and only if $\alpha\in \ds N$, which is why we assume $\alpha\in \ds R\setminus \ds N$.

\subsection{Definitions}

\begin{Def}
A \textbf{Hermitian superspace} $(\mc H, \left<\cdot\, ,\cdot \right>)$ is a supervector space $\mc H = \mc H_{\ol 0}\oplus \mc H_{\ol 1}$ endowed with a non-degenerate, superhermitian, sesquilinear form $\left<\cdot\, ,\cdot \right>$. If the inner product is a homogeneous form of degree $\sigma(\mc H)\in \Z/2\Z$, then $\mc H$ is called a Hermitian superspace of \textbf{parity} $\sigma(\mc H)$.
\end{Def}

According to the propositions in \cite[Section 5]{BC3}, the polynomial Fock space $\pFock$ endowed with the Bessel-Fischer product $\bfip{\cdot\, ,\cdot}$ is such a Hermitian superspace.

\begin{Def}
A \textbf{fundamental symmetry} of a Hermitian superspace $(\mc H, \left<\cdot\, ,\cdot \right>)$ is an endomorphism $J$ of $\mc H$ such that $J^4=1$, $\left<J(x) ,J(y) \right> = \left<x ,y \right>$ and $(\cdot \, , \cdot)_J$ defined by
\begin{align*}
\ipJ{x,y} := \left<x,J(y)\right>,
\end{align*}
for all $x,y\in \mc H$ is an inner product on $\mc H$.
\end{Def}

For $\pFock$ we find the following condition on its fundamental symmetries with respect to the Bessel-Fischer product.

\begin{Prop}\label{PropFunSym}
For all fundamental symmetries of $\pFock$ we must have
\begin{align*}
J(z_1^k)_{z_1^k} &= \epsilon_{1,k}\sgn((-\alpha)_k), & J(z_1^kz_2)_{z_1^kz_2} &= -\epsilon_{2,k}\sgn((-\alpha)_k),\\
J(z_1^kz_3)_{z_1^kz_4} &= \epsilon_{3,k}\sgn((-\alpha)_{k+1}), & J(z_1^kz_4)_{z_1^kz_3} &= -\epsilon_{4,k}\sgn((-\alpha)_{k+1}),
\end{align*}
for all $k\in\ds N$. Here $J(a)_{b}$ denotes the coefficient of $b$ in $J(a)$ and $\epsilon_{i,k}>0$ for all $i\in \{1,2,3,4\}$.
\end{Prop}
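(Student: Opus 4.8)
The plan is to exploit the very explicit description of the Bessel–Fischer product given in Proposition \ref{PropBFPexplicit}, which already exhibits $\pFock$ as an orthogonal direct sum of two-dimensional (or one-dimensional) blocks indexed by $k\in\N$. Concretely, for each $k$ the "even" block is spanned by $z_1^k$ and $z_1^kz_2$ with Gram matrix $\operatorname{diag}\bigl(k!(-\alpha)_k,\, -k!(-\alpha)_k\bigr)$, and the "odd" block is spanned by $z_1^kz_3$ and $z_1^kz_4$ with Gram matrix $k!(-\alpha)_{k+1}\begin{psmallmatrix}0&2\\-2&0\end{psmallmatrix}$ — wait, I must not use an undefined macro, so: the odd block has $\bfip{z_1^kz_3,z_1^kz_4}=2k!(-\alpha)_{k+1}=-\bfip{z_1^kz_4,z_1^kz_3}$ and the diagonal entries vanish. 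Since $J^4=1$ and $J$ preserves $\bfip{\cdot,\cdot}$, and since $(\cdot,\cdot)_J$ must be positive definite, $J$ cannot mix different blocks: if it did, positivity of $(\cdot,\cdot)_J$ on a finite-dimensional subspace would be violated because the signature of $\bfip{\cdot,\cdot}$ restricted to any $J$-invariant finite-dimensional subspace is constrained. So the first step is to argue that $J$ is block-diagonal with respect to the $k$-grading (using that $J$ must be $\bfip{\cdot,\cdot}$-isometric and that distinct blocks sit in distinct "monomial degrees" preserved by the form).

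The second step is a finite, block-by-block linear-algebra computation. Fix $k$ and work inside the even block $E_k=\operatorname{span}\{z_1^k,z_1^kz_2\}$. Write $\epsilon := \sgn((-\alpha)_k)$, so the Gram matrix of $\bfip{\cdot,\cdot}$ on $E_k$ is $\epsilon\,k!\,|(-\alpha)_k|\cdot\operatorname{diag}(1,-1)$; thus $\bfip{\cdot,\cdot}|_{E_k}$ has signature $(1,1)$. The condition that $J|_{E_k}$ is a $\bfip{\cdot,\cdot}$-isometry with $J^4=1$ and that $\ipJ{\cdot,\cdot}|_{E_k} = \bfip{\cdot, J(\cdot)}$ is positive definite forces $J|_{E_k}$ to send the positive line of $\bfip{\cdot,\cdot}$ to itself and the negative line to minus itself (up to the isometry freedom). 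Writing out $J(z_1^k) = J(z_1^k)_{z_1^k} z_1^k + (\text{multiple of }z_1^kz_2)$ and imposing $\ipJ{z_1^k,z_1^k}>0$ gives $\bfip{z_1^k, J(z_1^k)} = \epsilon\,k!\,|(-\alpha)_k| \cdot J(z_1^k)_{z_1^k} > 0$, hence $J(z_1^k)_{z_1^k} = \epsilon_{1,k}\,\sgn((-\alpha)_k)$ with $\epsilon_{1,k} = |J(z_1^k)_{z_1^k}|/\bigl(k!|(-\alpha)_k|\bigr)>0$; similarly the sign flip in the $z_1^kz_2$ slot of the Gram matrix produces the extra minus sign in $J(z_1^kz_2)_{z_1^kz_2}= -\epsilon_{2,k}\sgn((-\alpha)_k)$. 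For the odd block $O_k=\operatorname{span}\{z_1^kz_3,z_1^kz_4\}$ one repeats this with the antidiagonal Gram matrix: positivity of $\ipJ{z_1^kz_3,z_1^kz_3} = \bfip{z_1^kz_3, J(z_1^kz_3)} = 2k!(-\alpha)_{k+1}\cdot J(z_1^kz_3)_{z_1^kz_4}$ forces $J(z_1^kz_3)_{z_1^kz_4} = \epsilon_{3,k}\sgn((-\alpha)_{k+1})$ with $\epsilon_{3,k}>0$, and positivity of $\ipJ{z_1^kz_4,z_1^kz_4} = \bfip{z_1^kz_4, J(z_1^kz_4)} = -2k!(-\alpha)_{k+1}\cdot J(z_1^kz_4)_{z_1^kz_3}$ gives the claimed $J(z_1^kz_4)_{z_1^kz_3} = -\epsilon_{4,k}\sgn((-\alpha)_{k+1})$.

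I would also need to double-check the parity constraint: since $\bfip{\cdot,\cdot}$ is an odd form (the only nonzero pairings are even-with-even and odd-with-odd? — in fact from Proposition \ref{PropBFPexplicit} the nonzero pairings pair $z_1^k$ with $z_1^k$ and $z_1^kz_3$ with $z_1^kz_4$, all of which are within a fixed parity, so the form is actually even, $\sigma(\pFock)=\ol 0$), and $J$ is required to preserve it, $J$ must be parity-preserving, which is why the even block maps to the even block and likewise for odd. The genuinely delicate point — the main obstacle — is the \textbf{block-diagonality argument in step one}: one must rule out that $J$ mixes, say, $E_k$ with $O_{k'}$ or with $E_{k'}$ for $k\ne k'$. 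This is where I would use that $\bfip{\cdot,\cdot}$ vanishes identically off the blocks together with the isometry property: if $v\in E_k$ and $w\in E_{k'}$ with $k\ne k'$ then $\bfip{v,w}=0$, and applying $J^{-1}$ to a hypothetical off-diagonal component and tracking isometry plus the finite-rank positivity of $(\cdot,\cdot)_J$ on $J$-invariant finite-dimensional subspaces yields a contradiction; making this fully rigorous (rather than hand-wavy) is the part that requires care. Everything after that is bounded $2\times 2$ bookkeeping.
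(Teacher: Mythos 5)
The second half of your argument is, on its own, a complete proof and is exactly the paper's: since the Bessel--Fischer pairings of $z_1^k$ with \emph{every} other basis monomial vanish (Proposition \ref{PropBFPexplicit}), one has $\ipJ{z_1^k,z_1^k}=\bfip{z_1^k,J(z_1^k)}=J(z_1^k)_{z_1^k}\,k!(-\alpha)_k$ (up to a harmless complex conjugation of the coefficient, which is irrelevant because $k!(-\alpha)_k$ is real), and positive definiteness of $\ipJ{\cdot,\cdot}$ forces $J(z_1^k)_{z_1^k}$ to be a positive multiple of $\sgn((-\alpha)_k)$; the three other families are identical, with the antidiagonal pairing of $z_1^kz_3,z_1^kz_4$ producing the stated sign pattern. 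Note that this computation is completely insensitive to whatever components $J(z_1^k)$ may have outside the line $\ds C z_1^k$: they are annihilated by $\bfip{z_1^k,\cdot}$. So the statement, which only constrains four specific matrix coefficients of $J$, needs no structural information about the rest of $J$. (Two minor slips: $\epsilon_{1,k}$ is just $|J(z_1^k)_{z_1^k}|$, not that number divided by $k!|(-\alpha)_k|$, though only positivity matters; and strict positivity is what guarantees the coefficient is nonzero.)

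The step you identify as the main obstacle --- block-diagonality of $J$ --- is therefore not needed, and, more importantly, it is not something you could prove, because it is false for general fundamental symmetries. The definition only requires $J^4=1$, invariance of $\bfip{\cdot,\cdot}$, and positivity of $\ipJ{\cdot,\cdot}$; if $T$ is any invertible endomorphism of $\pFock$ with $\bfip{Tx,Ty}=\bfip{x,y}$, then $J:=TST^{-1}$ satisfies all three conditions (indeed $\ipJ{x,y}=(T^{-1}x,T^{-1}y)_S$), yet need not preserve your blocks: for $\alpha<0$ the span of $1\in E_0$ and $z_1z_2\in E_1$ carries the form $\mathrm{diag}(1,\alpha)$ of signature $(1,1)$, on which $S=\mathrm{diag}(1,-1)$, and conjugating by a hyperbolic rotation of this plane (extended by the identity) yields a fundamental symmetry with $J(1)$ having a nonzero $z_1z_2$-component --- while still, of course, obeying the sign constraints of the proposition. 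Your signature-based heuristic for excluding such mixing cannot be repaired; the correct move is simply to delete step one and observe, as above, that orthogonality of the monomial basis under $\bfip{\cdot,\cdot}$ makes off-diagonal components invisible to the quantities being computed.
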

\begin{proof}
Suppose $J$ is an arbitrary fundamental symmetry of $\pFock$, then we have
\begin{align*}
\ipJ{z_1^k, z_1^k} = \bfip{z_1^k, J(z_1^k)} = J(z_1^k)_{z_1^k}\bfip{z_1^k,z_1^k} = J(z_1^k)_{z_1^k}k!(-\alpha)_k >0,
\end{align*}
for all $k\in \ds N$. Therefore, $J(z_1^k)_{z_1^k} = \epsilon \sgn((-\alpha)_k)$ for an $\epsilon>0$. The other three cases are similar.
\end{proof}

Based on this condition, we define the endomorphism $S$ of $\pFock$ by the linear extension of
\begin{align*}
&S(z_1^k) := \sgn((-\alpha)_k)z_1^k, && S(z_1^kz_2) := -\sgn((-\alpha)_k)z_1^kz_2,\\
&S(z_1^kz_3) := \sgn((-\alpha)_{k+1})z_1^kz_4, && S(z_1^kz_4) := -\sgn((-\alpha)_{k+1})z_1^kz_3,
\end{align*}
for all $k\in\ds N$. Then, one can easily verify that $S$ is a fundamental symmetry of $\pFock$ with respect to the Bessel-Fischer product.

\begin{Prop}
Suppose $p,q\in \{z_1^{k}, z_1^kz_2,z_1^kz_3, z_1^kz_4\}$, with $k\in\N$. Then the only non-zero evaluations of $(p, q)_S$ are
\begin{align*}
(z_1^{k}, z_1^{k})_S &= (z_1^{k}z_2, z_1^{k}z_2)_S = k!|(-\alpha)_{k}|,\\
(z_1^{k}z_3, z_1^{k}z_3)_S &= (z_1^{k}z_4, z_1^{k}z_4)_S  = 2k!|(-\alpha)_{k+1}|,
\end{align*}
where we used the Pochhammer symbol $(a)_k = a(a+1)(a+2)\cdots (a+k-1)$.
\end{Prop}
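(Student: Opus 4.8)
The plan is to compute $(\cdot\,,\cdot)_S$ directly from its definition $\ipJ[S]{x,y} = \bfip{x, S(y)}$ together with the explicit values of the Bessel--Fischer product in Proposition \ref{PropBFPexplicit} and the explicit formula for $S$. Since $S$ is linear and sends each of the basis vectors $z_1^k, z_1^kz_2, z_1^kz_3, z_1^kz_4$ to a scalar multiple of a single basis vector, and since the Bessel--Fischer product vanishes on all pairs of such basis vectors except the four families listed in Proposition \ref{PropBFPexplicit}, the evaluation $(p,q)_S$ will be non-zero only for the pairs claimed. So the proof is really just four short computations plus a bookkeeping remark that everything else vanishes.

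Concretely, first I would observe that by Proposition \ref{PropBFPexplicit} the only possibly non-zero values of $\bfip{p, S(q)}$ occur when $S(q)$ lies in the same ``slot'' as $p$; given the form of $S$ this forces $(p,q)_S$ to vanish unless $p$ and $q$ are $z_1^k$ and $z_1^k$, or $z_1^kz_2$ and $z_1^kz_2$, or $\{z_1^kz_3, z_1^kz_4\}$ paired with $\{z_1^kz_3, z_1^kz_4\}$ appropriately (the latter because $S$ swaps the $z_3$- and $z_4$-slots). Then I would carry out the four computations: for instance
\begin{align*}
(z_1^k, z_1^k)_S &= \bfip{z_1^k, S(z_1^k)} = \sgn((-\alpha)_k)\,\bfip{z_1^k, z_1^k} = \sgn((-\alpha)_k)\,k!(-\alpha)_k = k!\,\abs{(-\alpha)_k},\\
(z_1^kz_3, z_1^kz_3)_S &= \bfip{z_1^kz_3, S(z_1^kz_3)} = \sgn((-\alpha)_{k+1})\,\bfip{z_1^kz_3, z_1^kz_4} = 2k!\,\abs{(-\alpha)_{k+1}},
\end{align*}
and the $z_1^kz_2$ and $z_1^kz_4$ cases are entirely analogous, the sign from $S$ combining with the sign from $\bfip{\cdot\,,\cdot}$ to produce an absolute value in every case. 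In particular the relative minus signs present in Proposition \ref{PropBFPexplicit} are exactly cancelled by the defining minus signs in $S$, which is the whole point of the definition of $S$.

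There is no real obstacle here; the only thing to be slightly careful about is that the Bessel--Fischer product is sesquilinear and superhermitian rather than symmetric bilinear, so I should make sure I apply $S$ to the correct argument (the second one) and that the sign conventions for odd elements are respected when checking $(z_1^kz_3, z_1^kz_3)_S = (z_1^kz_4, z_1^kz_4)_S$ and that there is no nonzero ``cross'' term $(z_1^kz_3, z_1^kz_4)_S$. Since $S(z_1^kz_4)$ is a multiple of $z_1^kz_3$ and $\bfip{z_1^kz_3, z_1^kz_3} = 0$ by Proposition \ref{PropBFPexplicit}, that cross term indeed vanishes, so the statement as written is correct. The proof can therefore be written in a few lines by treating one representative case in detail and noting the remaining cases are similar.
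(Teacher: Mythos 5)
Your proposal is correct and follows the same route as the paper: the paper's proof simply invokes Proposition \ref{PropBFPexplicit}, and your computation $(p,q)_S = \bfip{p, S(q)}$, with the real signs in $S$ cancelling the signs of the Bessel--Fischer product and the remaining pairings vanishing, is exactly the intended argument, just written out explicitly.
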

\begin{proof}
This follows immediately from Proposition \ref{PropBFPexplicit}.
\end{proof}

\begin{Def}
A Hermitian superspace $(\mc H, \left<\cdot\, ,\cdot \right>)$ is a \textbf{Hilbert superspace} if there exists a fundamental symmetry $J$ such that $(\mc H, \ipJ{\cdot \, , \cdot})$ is a Hilbert space.
\end{Def}

Note that the choice of a fundamental symmetry does not matter for the topology, thanks to \cite[Theorem 3.4]{dGM}.

Denote by $\Fock$ the completion of $\pFock$ with respect to $\ipS{\cdot \, , \cdot}$, then $(\Fock, \bfip{\cdot\, ,\cdot})$ is a Hilbert superspace, which we call the Fock space. Define $\norm{f}_S := \sqrt{\ipS{f,f}}$, then we have
\begin{align*}
\Fock &= \left\lbrace f = f_{1,0} + \sum_{k=1}^\infty z_1^{k-1}(f_{1,k}z_1 + f_{2,k}z_2 + f_{3,k}z_3 + f_{4,k}z_4) : \norm{f}_S< \infty,\, f_{i,k} \in \ds C \right\rbrace.
\end{align*}
The condition $\norm{f}_S< \infty$ on $f$ is equivalent to the condition that the sums
\begin{align*}
\sum_{k=0}^\infty k!|(-\alpha)_k||f_{1,k}|^2, && \sum_{k=1}^\infty (k-1)!|(-\alpha)_{k-1}||f_{2,k}|^2,\\
\sum_{k=1}^\infty (k-1)!|(-\alpha)_{k}||f_{3,k}|^2, && \sum_{k=1}^\infty (k-1)!|(-\alpha)_{k}||f_{4,k}|^2
\end{align*}
converge.

\section{The superunitary representation $\rho_0$}
\label{Section superunitarity}

In this section we explicitly integrate the differential action $\rol$ of $D(2,1,\alpha)$ on $\pFock$ to an action $\rho_0$ of $\ds D(2,1,\alpha)$ on $\Fock$. We also introduce the notion of superunitary representations as defined in \cite{dGM}. Then, we prove that our action defines a superunitary representation on $\Fock$ for $\alpha<0$.

Recall from Section \ref{Section Fock space} that we assume $\alpha\in \ds R\setminus \ds N$. 

\subsection{Definition and explicit form}

We define $\rho_0(\exp(X)) := \exp(\rol(X))$ for all $X\in \mf g_{\ol 0}$. Because of Corollary \ref{CorProd} this defines a representation of all of $G_0$. We will now describe this representation more explicitly. Note that we omit the action of $A_2(a_2)$ from our explicit representation. This case will be discussed in Section \ref{SSActA2}.

\begin{theorem}\label{ExplicitK}
The representation $\rho_0$ acting on $f = f(z_1,z_2,z_3,z_4)\in \Fock$ is given by\begingroup\allowdisplaybreaks
\begin{align}\label{EqK1}
\rho_0(K_1(k_1))f &= f(z_1,z_2, \cos(k_1)z_3-2^{-1}\sin(k_1)z_4, 2\sin(k_1)z_3+ \cos(k_1)z_4),\\\label{EqK2}
\rho_0(K_2(k_2))f &= \exp(\imath \alpha k_2)f(\exp(-2\imath k_2)z_1,z_2, \exp(-\imath k_2)z_3, \exp(-\imath k_2)z_4),\\\label{EqK3}
\rho_0(K_3(k_3))f &= \exp(\imath k_3)f(z_1, \exp(-2\imath k_3)z_2, \exp(-\imath k_3)z_3, \exp(-\imath k_3)z_4),\\\label{EqA1}
\rho_0(A_1(a_1))f &= f(z_1, z_2, \exp(-a_1)z_3, \exp(a_1) z_4),\\\label{EqA3}
\rho_0(A_3(a_3))f &= (\cosh(a_3)+\sinh(a_3)z_2)\\\nonumber
&\times f(z_1,\tanh(a_3)+\cosh(a_3)^{-2}z_2,\cosh(a_3)^{-1}z_3,\cosh(a_3)^{-1}z_4),
\end{align}\endgroup
\end{theorem}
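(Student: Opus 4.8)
The plan is to compute $\exp(\rol(X))$ explicitly for each of the generators $X$ that corresponds to the one-parameter subgroups $K_i$ and $A_i$ appearing in the statement, using the identifications $K_i(k_i)=\exp(k_i(F_i-E_i))$ and $A_i(a_i)=\exp(a_i H_i)$ established just before Corollary \ref{CorProd}. So for each case I need the operator $\rol(F_i-E_i)$ or $\rol(H_i)$ on $\pFock$, exponentiate it, and check that the resulting formula preserves $\Fock$ and agrees with the claimed expression. The easy cases are $H_1$, $H_2$, $H_3$ and the combinations $F_1-E_1$, $F_3-E_3$ that act as first-order differential operators with no Bessel terms: for these, $\rol(X)$ is (up to scalars) a vector field generating a flow, so $\exp(\rol(X))$ is the geometric action of that flow together with a multiplier.

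Concretely, first I would read off from the Section \ref{D(2,1,alpha)} formulas that $\rol(E_1)=2^{-1}z_4\pt{z_3}$, $\rol(F_1)=2z_3\pt{z_4}$, hence $\rol(F_1-E_1)=2z_3\pt{z_4}-2^{-1}z_4\pt{z_3}$; exponentiating this $2\times 2$ linear vector field on the $(z_3,z_4)$-plane gives the rotation-type substitution in \eqref{EqK1}, and one checks $(F_1-E_1)$ squares (on the relevant span) to $-1$ so that $\cos,\sin$ appear with the asymmetric factors $2$ and $2^{-1}$. For \eqref{EqA1}, $\rol(H_1)=z_4\pt{z_4}-z_3\pt{z_3}$ is diagonal, so $\exp(a_1\rol(H_1))$ just rescales $z_3\mapsto e^{-a_1}z_3$, $z_4\mapsto e^{a_1}z_4$. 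For \eqref{EqK2} and \eqref{EqK3} I would use $\rol(F_2-E_2)$ and $\rol(F_3-E_3)$: from the displayed formulas $\rol(E_2)-\rol(F_2)= \imath(-\lambda+2z_1\pt{z_1}+z_3\pt{z_3}+z_4\pt{z_4})$, which with $\lambda=\alpha$ is $\imath$ times a diagonalizable Euler-type operator plus a constant $-\imath\alpha$; exponentiating produces the scalar $\exp(\imath\alpha k_2)$ and the substitution $z_1\mapsto e^{-2\imath k_2}z_1$, $z_3\mapsto e^{-\imath k_2}z_3$, $z_4\mapsto e^{-\imath k_2}z_4$, which is exactly \eqref{EqK2}; similarly for $K_3$ using $\rol(F_3-E_3)=\imath(-\lambda/\alpha+2z_2\pt{z_2}+z_3\pt{z_3}+z_4\pt{z_4})$ with $\lambda/\alpha=1$. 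In each case I must verify that the one-parameter group so obtained is well defined on the completed space $\Fock$, which follows because these operators act diagonally (or block-diagonally) on the explicit basis $\{z_1^k, z_1^kz_2, z_1^kz_3, z_1^kz_4\}$ with purely imaginary or bounded eigenvalue behaviour.

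The genuinely hard case is \eqref{EqA3}, the action of $A_3(a_3)=\exp(a_3 H_3)$, because $\rol(H_3)=z_2-\bessel(z_2)$ contains the second-order Bessel operator $\bessel(z_2)=(-\lambda/\alpha+z_2\pt{z_2}+z_3\pt{z_3}+z_4\pt{z_4})\pt{z_2}-2z_1\pt{z_3}\pt{z_4}$, so exponentiating is not a mere change of variables. My approach here is to exploit the normal form $p=p_{1,0}+\sum_{k\ge 1}z_1^{k-1}(p_{1,k}z_1+p_{2,k}z_2+p_{3,k}z_3+p_{4,k}z_4)$ from Section \ref{D(2,1,alpha)}: on this reduced model $\rol(H_3)$ becomes a tridiagonal operator in $k$ whose generating function can be resummed. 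Equivalently, I would \emph{guess} the closed form \eqref{EqA3} — a multiplier $(\cosh a_3+\sinh a_3\, z_2)$ times the substitution $z_2\mapsto \tanh a_3+\cosh(a_3)^{-2}z_2$, $z_3\mapsto\cosh(a_3)^{-1}z_3$, $z_4\mapsto\cosh(a_3)^{-1}z_4$, $z_1\mapsto z_1$ — and verify directly that $t\mapsto (\text{the formula with }a_3=t)$ is a one-parameter group whose derivative at $t=0$ equals $\rol(H_3)$ modulo $\mc I_\lambda$. That is, differentiate the candidate expression in $t$ at $t=0$: the derivative of the multiplier gives $z_2\cdot$, and the derivative of the substitution gives a vector field $\pt{z_2}$-type term together with $-(z_3\pt{z_3}+z_4\pt{z_4})$ and a $z_2\pt{z_2}$ contribution with the right coefficients; the term $-2z_1\pt{z_3}\pt{z_4}$ of the Bessel operator must be accounted for using the relation in $V_\alpha$ (namely $2z_1z_2+z_3z_4\in\mc I_\alpha$, i.e. $z_3z_4\equiv -2z_1z_2$ in $\pFock$), which is precisely why the substitution in $z_2$ has the affine shift $\tanh a_3$ rather than a linear rescaling. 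Checking this one-parameter-group property and the $t=0$ derivative, and then arguing (via the uniqueness of solutions to $\tfrac{d}{dt}\rho_0(\exp(tH_3))=\rol(H_3)\rho_0(\exp(tH_3))$ on the dense polynomial subspace, together with boundedness estimates in the $\norm{\cdot}_S$-norm to pass to $\Fock$) that this candidate really is $\exp(\rol(H_3))$, is the main obstacle; the restriction $\alpha>0$ mentioned in the introduction for the remaining generator $A_2(a_2)$ strongly suggests that this kind of resummation is delicate, so I expect the $A_3$ computation to be where most of the care goes — in particular verifying convergence of the resulting series on $\Fock$ and the well-definedness modulo $\mc I_\lambda$.
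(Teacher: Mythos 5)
Your strategy for \eqref{EqK1}--\eqref{EqA1} is exactly the paper's: write $K_i(k_i)=\exp(k_i(F_i-E_i))$, $A_i(a_i)=\exp(a_iH_i)$, note that the corresponding operators $\rol(F_1-E_1)=2z_3\pt{z_4}-2^{-1}z_4\pt{z_3}$, $\rol(F_2-E_2)$, $\rol(F_3-E_3)$ and $\rol(H_1)$ are first-order with constant or Euler-type parts, and exponentiate them directly (the paper does the $K_1$ case by summing the series using $(2z_3\pt{z_4}-2^{-1}z_4\pt{z_3})^2=-(z_3\pt{z_3}+z_4\pt{z_4})$, and splits the commuting pieces for $K_2,K_3$). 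For \eqref{EqA3} your route differs from the paper's: the paper does not guess-and-verify but computes the exponential series directly, using that on normal-form elements $\rol(H_3)=z_2-\bessel(z_2)$ acts as $z_2+\pt{z_2}$ (multiplication being followed by reduction mod $\mc I_\alpha$), so that $(z_2+\pt{z_2})^{2i}$, $i\geq 1$, is simply the projection onto the $\{z_1^k,z_1^kz_2\}$-components and the series resums to $\cosh/\sinh$ coefficients, which are then repackaged as the multiplier-times-substitution formula. Your alternative (check the candidate is a one-parameter group with derivative $\rol(H_3)$ at $t=0$, then invoke uniqueness of the flow on the dense polynomial subspace and a norm bound to pass to $\Fock$) is legitimate, but carries a pitfall you should make explicit: the closed-form expression does \emph{not} satisfy the group law if you compose the substitutions formally as for ordinary functions (the maps $z_2\mapsto\tanh t+\cosh(t)^{-2}z_2$ do not compose to the $t+s$ map); the cocycle identity only holds after reducing modulo $\mc I_\alpha$ using $z_2^2,\,z_2z_3,\,z_2z_4\in V_\alpha$, i.e.\ by working with the coefficient form $f_{1,k}\mapsto\cosh(t)f_{1,k}+\sinh(t)f_{2,k+1}$, $f_{2,k}\mapsto\cosh(t)f_{2,k}+\sinh(t)f_{1,k-1}$, which is exactly the paper's intermediate formula. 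Also, your attribution of the affine shift $\tanh a_3$ to the second-order term $-2z_1\pt{z_3}\pt{z_4}$ of $\bessel(z_2)$ is off: that term annihilates all normal-form representatives (they contain at most one of $z_3,z_4$); the shift instead encodes the $\pt{z_2}$-part, which is what $-\bessel(z_2)$ reduces to on the normal form. With these corrections your verification closes, and boundedness on $\Fock$ is immediate since the two mixed coefficients carry the same weight $k!|(-\alpha)_k|$ in $\norm{\cdot}_S$; the paper's direct resummation simply avoids having to check the group law at all.
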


\begin{proof}
\item[(\ref{EqK1})] We have
\begin{align*}
\rho_0(K_1(k_1)) &= \exp(\rol(k_1(F_1-E_1))) = \exp(k_1(2z_3\pt{z_4}-\dfrac{1}{2}z_4\pt{z_3}))\\
&= \sum_{i=0}^\infty \dfrac{k_1^i}{i!} (2z_3\pt{z_4}-\dfrac{1}{2}z_4\pt{z_3})^i,
\end{align*}
with
\begin{align*}
(2z_3\pt{z_4}-\dfrac{1}{2}z_4\pt{z_3})^2 &= - (z_3\pt{z_3}+z_4\pt{z_4}),\\
(2z_3\pt{z_4}-\dfrac{1}{2}z_4\pt{z_3})^3 &= - (2z_3\pt{z_4}-\dfrac{1}{2}z_4\pt{z_3}),
\end{align*}
and therefore
\begin{align*}
\rho_0(K_1(k_1)) &= 1-(z_3\pt{z_3}+z_4\pt{z_4})+\sum_{i=0}^\infty (-1)^i\dfrac{k_1^{2i}}{(2i)!}(z_3\pt{z_3}+z_4\pt{z_4})\\
&\quad + \sum_{i=0}^\infty (-1)^i\dfrac{k_1^{2i+1}}{(2i+1)!}(2z_3\pt{z_4}-\dfrac{1}{2}z_4\pt{z_3})\\
&= 1-z_3\pt{z_3}-z_4\pt{z_4}+\cos(k_1)(z_3\pt{z_3}+z_4\pt{z_4})\\
&\quad + \sin(k_1)(2z_3\pt{z_4}-\dfrac{1}{2}z_4\pt{z_3}).
\end{align*}
This gives us
\begin{align*}
\rho_0(K_1(k_1))f(z_1,z_2,z_3,z_4) &= f(z_1,z_2, \left(\begin{matrix}
\cos(k_1) & -2^{-1}\sin(k_1)\\
2\sin(k_1) & \cos(k_1)
\end{matrix}\right)\binom{z_3}{z_4}).
\end{align*}
\item[(\ref{EqK2}) and (\ref{EqK3})]We have
\begin{align*}
\rho_0(K_2(k_2)) &=\exp(\rol(k_2(F_2-E_2)))\\
&= \exp(\alpha\imath  k_2 - 2\imath k_2 z_1\pt{z_1} - \imath k_2 z_3\pt{z_3} - \imath k_2 z_4\pt{z_4})\\
&= \exp(\alpha \imath k_2) \exp(-2\imath k_2 z_1\pt{z_1})\exp(-\imath k_2 z_3\pt{z_3})\exp(-\imath k_2 z_4\pt{z_4}),
\end{align*}
and
\begin{align*}
\rho_0(K_3(k_3)) &=\exp(\rol(k_3(F_3-E_3)))\\
&= \exp(\imath  k_3 - 2\imath k_3 z_2\pt{z_2} - \imath k_3 z_3\pt{z_3} - \imath k_3 z_4\pt{z_4})\\
&= \exp(\imath k_3) \exp(-2\imath k_3 z_2\pt{z_2})\exp(-\imath k_3 z_3\pt{z_3})\exp(-\imath k_3 z_4\pt{z_4}).
\end{align*}
Since $\exp(a z_i\pt{z_i})f(z_i) = f(\exp(a)z_i)$ for all $a\in \ds C$ we get
\begin{align*}
\rho_0(K_2(k_2))f(z_1,z_2,z_3,z_4) &= e^{\imath \alpha k_2}f(e^{-2\imath k_2}z_1, z_2, e^{-\imath k_2}z_3,e^{-\imath k_2}z_4),
\end{align*}
and
\begin{align*}
\rho_0(K_3(k_3))f(z_1,z_2,z_3,z_4) &= e^{\imath k_3}f(z_1, e^{-2\imath k_3}z_2, e^{-\imath k_3}z_3,e^{-\imath k_3}z_4),
\end{align*}
respectively.
\item[(\ref{EqA1})] We have
\begin{align*}
\rho_0(A_1(a_1))f &= \exp(a_1\rol(H_1))f = \exp(a_1(z_4\pt{z_4}-z_3\pt{z_3}))f\\
&= f(z_1, z_2, \exp(-a_1)z_3, \exp(a_1) z_4)
\end{align*}

\item[(\ref{EqA3})] We have
\begin{align*}
\rho_0(A_3(a_3))f &= \exp(a_3\rol(H_3))f = \exp(a_3(z_2+\pt{z_2}))f = \sum_{i=0}^\infty\dfrac{a_3^i}{i!}(z_2+\pt{z_2})^i f
\end{align*}
with
\begin{align*}
(z_2+\pt{z_2})^{2i}f &= \sum_{k=0}^{\infty} z_1^{k-1}(f_{1,k}z_1+f_{2,k}z_2),\\
(z_2+\pt{z_2})^{2i-1}f &= \sum_{k=0}^{\infty} z_1^{k}f_{2,k+1} + \sum_{k=0}^{\infty} z_1^{k-1}z_2 f_{1,k-1}\\
&= \sum_{k=0}^\infty z_1^{k-1}(f_{2,k+1}z_1+f_{1,k-1}z_2),
\end{align*}
for $i\geq 1$. Therefore
\begin{align*}
\rho_0(A_3(a_3))f &= f +(\cosh(a_3)-1) \sum_{k=0}^{\infty} z_1^{k-1}(f_{1,k}z_1+f_{2,k}z_2)\\
&\quad + \sinh(a_3)\sum_{k=0}^\infty z_1^{k-1}(f_{2,k+1}z_1+f_{1,k-1}z_2)\\
&= \sum_{k=0}^\infty z_1^{k-1} ((\cosh(a_3)f_{1,k}+\sinh(a_3)f_{2,k+1})z_1\\
&\quad + ( \cosh(a_3)f_{2,k} + \sinh(a_3)f_{1,k-1} )z_2 +f_{3,k}z_3+f_{4,k}z_4)\\
&= (\cosh(a_3)+\sinh(a_3)z_2)\\
&\quad \times f(z_1,\tanh(a_3)+\cosh(a_3)^{-2}z_2,\cosh(a_3)^{-1}z_3,\cosh(a_3)^{-1}z_4)
\end{align*}
\end{proof}

We have two alternative ways to present this representation. The first one is as follows. Suppose $f\in \Fock$ and define
\begin{align*}
f_1(z_1) := \sum_{k=0}^\infty z_1^{k}f_{i,k}\quad \text{ and }\quad f_i(z_1) := \sum_{k=0}^\infty z_1^{k}f_{i,k+1},
\end{align*}
for $i\in\{2,3,4\}$. Then we have $f = f_1(z_1)+f_2(z_1)z_2+f_3(z_1)z_3+f_4(z_1)z_4$ and we can view $f$ as the vector
\begin{align*}
f = \left(\begin{matrix}
f_1(z_1)\\
f_2(z_1)\\
f_3(z_1)\\
f_4(z_1)
\end{matrix} \right).
\end{align*}

The representation $\rho_0$ on $\Fock$ can now be given by matrices acting on $f\in\Fock$.

\begin{Cor}\label{ExplicitKActive}
The representation $\rho_0$ acting on $f\in\Fock$ given by\begingroup\allowdisplaybreaks
\begin{align*}
\rho_0(K_1(k_1))f &= \left(\begin{matrix}
1 & 0 & 0 & 0\\
0 & 1 & 0 & 0\\
0 & 0 & \cos(k_1) & 2\sin(k_1)\\
0 & 0 & -2^{-1}\sin(k_1) & \cos(k_1)
\end{matrix}\right)f,\\
\rho_0(K_2(k_2))f &= \left(\begin{matrix}
e^{\imath k_2(\alpha-2\ds E)} & 0 & 0 & 0\\
0 & e^{\imath k_2 (\alpha-2\ds E)} & 0 & 0\\
0 & 0 & e^{\imath k_2 (\alpha-1-2\ds E)} & 0\\
0 & 0 & 0 & e^{\imath k_2 (\alpha-1-2\ds E)}
\end{matrix}\right)f,\\
\rho_0(K_3(k_3))f &= \left(\begin{matrix}
e^{\imath k_3} & 0 & 0 & 0\\
0 & e^{-\imath k_3 } & 0 & 0\\
0 & 0 & 1 & 0\\
0 & 0 & 0 & 1
\end{matrix}\right)f,\\
\rho_0(A_1(a_1))f &= \left(\begin{matrix}
1 & 0 & 0 & 0\\
0 & 1 & 0 & 0\\
0 & 0 & e^{-a_1} & 0\\
0 & 0 & 0 & e^{a_1}
\end{matrix}\right)f,\\
\rho_0(A_3(a_3))f &= \left(\begin{matrix}
\cosh(a_3) & \sinh(a_3) & 0 & 0\\
\sinh(a_3) & \cosh(a_3)  & 0 & 0\\
0 & 0 & 1 & 0\\
0 & 0 & 0 & 1\\
\end{matrix}\right)f,
\end{align*}
where $\ds E := z_1\pt{z_1}$ denotes the Euler operator on $f_i(z_1)$, $i\in\{1,2,3,4\}$.\endgroup
\end{Cor}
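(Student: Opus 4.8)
The plan is to obtain Corollary \ref{ExplicitKActive} as a direct translation of Theorem \ref{ExplicitK} into the vector notation $f = (f_1(z_1),f_2(z_1),f_3(z_1),f_4(z_1))^t$ just introduced, so no new analytic input is needed: each generator acts by substitution in the $z_i$, and I only need to record how such a substitution permutes and rescales the components $f_i(z_1)$. First I would fix the dictionary between the two descriptions: writing $f = f_1(z_1)+f_2(z_1)z_2+f_3(z_1)z_3+f_4(z_1)z_4$, a coefficient $z_1^{k}$ in $f_1$ corresponds to $f_{1,k}$, while $z_1^{k}$ in $f_i$ ($i\in\{2,3,4\}$) corresponds to $f_{i,k+1}$; hence the operator $\ds E = z_1\pt{z_1}$ on $f_i(z_1)$ reads off the shifted index, which is exactly what is needed to match the factors $e^{-2\imath k_2 z_1\pt{z_1}}$ appearing in the proof of Theorem \ref{ExplicitK}.

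Next I would go generator by generator. For $K_1(k_1)$: equation \eqref{EqK1} substitutes $z_3\mapsto \cos(k_1)z_3 - 2^{-1}\sin(k_1)z_4$ and $z_4\mapsto 2\sin(k_1)z_3 + \cos(k_1)z_4$, leaving $z_1,z_2$ fixed; collecting the new coefficients of $z_3$ and $z_4$ gives precisely the stated $4\times 4$ matrix acting on $(f_1,f_2,f_3,f_4)^t$, the $2\times 2$ block being the transpose-type rearrangement of the substitution matrix. For $K_2(k_2)$: equation \eqref{EqK2} gives the prefactor $\exp(\imath\alpha k_2)$ and substitutes $z_1\mapsto e^{-2\imath k_2}z_1$, $z_3\mapsto e^{-\imath k_2}z_3$, $z_4\mapsto e^{-\imath k_2}z_4$; since $f(e^{-2\imath k_2}z_1,\ldots)$ multiplies the degree-$k$ part in $z_1$ by $e^{-2\imath k k_2}$, this is $e^{-2\imath k_2\ds E}$ acting componentwise, and the extra $e^{-\imath k_2}$ on the $z_3,z_4$ components together with the $e^{\imath\alpha k_2}$ prefactor assembles into the stated diagonal matrix with entries $e^{\imath k_2(\alpha - 2\ds E)}$ and $e^{\imath k_2(\alpha-1-2\ds E)}$ (the $-1$ coming exactly from the index shift $f_{i,k+1}$ in the definition of $f_i(z_1)$ for $i=3,4$, or equivalently from the explicit $e^{-\imath k_2 z_3\pt{z_3}}$, $e^{-\imath k_2 z_4\pt{z_4}}$ factors). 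The remaining three cases ($K_3(k_3)$, $A_1(a_1)$, $A_3(a_3)$) are handled identically: $K_3$ from \eqref{EqK3}, $A_1$ from \eqref{EqA1}, and $A_3$ from \eqref{EqA3}, where the last line of the proof of Theorem \ref{ExplicitK} already exhibits $\rho_0(A_3(a_3))f$ with the mixed $z_1$-coefficients $\cosh(a_3)f_{1,k}+\sinh(a_3)f_{2,k+1}$ and $\cosh(a_3)f_{2,k}+\sinh(a_3)f_{1,k-1}$, which are exactly the entries of the claimed $2\times 2$ block $\bigl(\begin{smallmatrix}\cosh(a_3)&\sinh(a_3)\\\sinh(a_3)&\cosh(a_3)\end{smallmatrix}\bigr)$ acting on $(f_1,f_2)^t$ in the shifted indexing.

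There is essentially no obstacle here; the only point requiring care — and the one I would state explicitly in the proof — is the bookkeeping of the index shift between $f_{i,k}$ and the coefficients of $f_i(z_1)$ for $i\in\{2,3,4\}$, which is what produces the asymmetry between the $(\alpha - 2\ds E)$ entries in rows $1,2$ and the $(\alpha-1-2\ds E)$ entries in rows $3,4$ of $\rho_0(K_2(k_2))$, and similarly the off-by-one in the $A_3$ block. Accordingly the proof is short: "This is a reformulation of Theorem \ref{ExplicitK} in the vector notation above; one only needs to track how the substitutions in \eqref{EqK1}--\eqref{EqA3} act on the components $f_i(z_1)$, using that $z_1\mapsto e^{c}z_1$ acts as $e^{c\ds E}$ and that $f_i(z_1)$ for $i\in\{2,3,4\}$ encodes $f_{i,k+1}$, which accounts for the shift in the exponents."

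\begin{proof}
This is merely a reformulation of Theorem \ref{ExplicitK} in the vector notation
$f = \bigl(f_1(z_1),f_2(z_1),f_3(z_1),f_4(z_1)\bigr)^t$ introduced above. For each generator we track how the substitution in the corresponding formula \eqref{EqK1}--\eqref{EqA3} acts on the components $f_i(z_1)$, using that the substitution $z_1\mapsto e^{c}z_1$ acts on a function of $z_1$ as $e^{c\ds E}$ with $\ds E = z_1\pt{z_1}$, and that $f_i(z_1)$ for $i\in\{2,3,4\}$ encodes the coefficients $f_{i,k+1}$, so that an index shift $f_{i,k}\mapsto f_{i,k+1}$ corresponds to the constant term of $\ds E+1$ rather than $\ds E$ on those components.

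For $\rho_0(K_1(k_1))$, formula \eqref{EqK1} fixes $z_1,z_2$ and sends
$z_3\mapsto \cos(k_1)z_3 - 2^{-1}\sin(k_1)z_4$, $z_4\mapsto 2\sin(k_1)z_3 + \cos(k_1)z_4$; collecting the coefficients of $z_3$ and $z_4$ in the image gives $f_3\mapsto \cos(k_1)f_3 + 2\sin(k_1)f_4$ and $f_4\mapsto -2^{-1}\sin(k_1)f_3 + \cos(k_1)f_4$, which is the stated matrix. For $\rho_0(K_2(k_2))$, formula \eqref{EqK2} contributes the scalar $e^{\imath\alpha k_2}$ and the substitution $z_1\mapsto e^{-2\imath k_2}z_1$, $z_3\mapsto e^{-\imath k_2}z_3$, $z_4\mapsto e^{-\imath k_2}z_4$; the first acts as $e^{-2\imath k_2\ds E}$ on every component $f_i(z_1)$, so rows $1$ and $2$ acquire the factor $e^{\imath k_2(\alpha-2\ds E)}$, while rows $3$ and $4$ acquire in addition the factor $e^{-\imath k_2}$, giving $e^{\imath k_2(\alpha-1-2\ds E)}$. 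For $\rho_0(K_3(k_3))$ one argues in the same way from \eqref{EqK3}: the scalar $e^{\imath k_3}$ and the substitution $z_2\mapsto e^{-2\imath k_3}z_2$, $z_3,z_4\mapsto e^{-\imath k_3}z_3, e^{-\imath k_3}z_4$ combine, using that $f_2(z_1)$ multiplies $z_2$ once, to give the diagonal matrix with entries $e^{\imath k_3}, e^{-\imath k_3}, 1, 1$. For $\rho_0(A_1(a_1))$, formula \eqref{EqA1} substitutes $z_3\mapsto e^{-a_1}z_3$, $z_4\mapsto e^{a_1}z_4$ and fixes $z_1,z_2$, yielding the stated diagonal matrix.

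Finally, for $\rho_0(A_3(a_3))$ the third displayed line in the proof of Theorem \ref{ExplicitK} already gives
\begin{align*}
\rho_0(A_3(a_3))f &= \sum_{k=0}^\infty z_1^{k-1}\bigl((\cosh(a_3)f_{1,k}+\sinh(a_3)f_{2,k+1})z_1 \\
&\qquad + (\cosh(a_3)f_{2,k}+\sinh(a_3)f_{1,k-1})z_2 + f_{3,k}z_3 + f_{4,k}z_4\bigr).
\end{align*}
In the notation $f_1(z_1)=\sum_k z_1^k f_{1,k}$ and $f_2(z_1)=\sum_k z_1^k f_{2,k+1}$, this reads $f_1\mapsto \cosh(a_3)f_1+\sinh(a_3)f_2$ and $f_2\mapsto \sinh(a_3)f_1+\cosh(a_3)f_2$, while $f_3,f_4$ are unchanged, which is exactly the stated matrix. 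Since, by Corollary \ref{CorProd}, $G_0$ is generated (via $SL(V_i)=K_iA_iK_i$) by the subgroups $K_i$ and $A_i$, this describes the representation $\rho_0$ on all of $G_0$.
\end{proof}
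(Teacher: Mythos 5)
Your proposal is correct and is exactly the (implicit) argument behind the paper's unproved corollary: a componentwise translation of the substitution formulas \eqref{EqK1}--\eqref{EqA3} of Theorem \ref{ExplicitK} into the vector notation, with $z_1\mapsto e^{c}z_1$ acting as $e^{c\ds E}$ and the extra $-1$ in rows $3,4$ of $\rho_0(K_2(k_2))$ coming from the $e^{-\imath k_2}$ scaling of $z_3,z_4$ (as your proof correctly states, rather than from the index shift mentioned in your informal discussion). The only slight overstatement is the closing sentence: since the action of $A_2(a_2)$ is deliberately omitted here (see Section \ref{SSActA2}), the listed matrices together with the Cartan decomposition do not by themselves determine $\rho_0$ on all of $G_0$.
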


The second method is as follows. Denote by $\mc P_k(\ds C^{m|n})$ the space of homogeneous superpolynomials of degree $k$ in $m$ even variables and $n$ odd variables. Then
\begin{align*}
\begin{matrix}
\phi : &F_\lambda &\rightarrow &\mc P_{\text{even}}(\ds C^{1|2}):= \bigoplus\limits_{k=0}^{\infty} \mc P_{2k}(\ds C^{1|2})\\
 &(z_1,z_2,z_3,z_4)&\mapsto &(2^{-1}\ell_1^2, \ell_2\ell_3,\ell_1\ell_3,\ell_1\ell_2),
\end{matrix}
\end{align*}
defines an isomorphism between $F_\lambda$ and the space of even degree superpolynomials in the even variable $\ell_1$ and the two odd variables $\ell_2, \ell_3$. Here the ``even'' in $\mc P_{\text{even}}(\ds C^{1|2})$ refers to the degree and not the parity of the superpolynomial terms.

The representation $\rho_0$ on $\Fock$ can now be given as an action on $f(\ell_1,\ell_2,\ell_3) \in \phi(\Fock)$.

\begin{Cor}\label{ExplicitKl}
The representation $\rho_0$ acting on $f = f(\ell_1,\ell_2,\ell_3) \in \phi(\Fock)$ is given by\begingroup\allowdisplaybreaks
\begin{align*}
\rho_0(K_1(k_1))f &= f(\ell_1, \cos(k_1)\ell_2+ 2\sin(k_1)\ell_3, -2^{-1}\sin(k_1)\ell_2+\cos(k_1)\ell_3),\\
\rho_0(K_2(k_2))f &= \exp(\imath \alpha k_2)f(\exp(-\imath k_2)\ell_1, \ell_2, \ell_3),\\
\rho_0(K_3(k_3))f &= \exp(\imath k_3)f(\ell_1, \exp(-\imath k_3)\ell_3, \exp(-\imath k_3)\ell_4),\\
\rho_0(A_1(a_1))f &= f(\ell_1, \exp(a_1)\ell_2, \exp(-a_1) \ell_3),\\
\rho_0(A_3(a_3))f &= (\cosh(a_3)+\sinh(a_3)\ell_2\ell_3)f(\ell_1, \cosh(a_3)^{-1}\ell_2, \cosh(a_3)^{-1}\ell_3)\\
&\quad+\sinh(a_3)(f(\ell_1,1,1)-f(\ell_1,1,0) - f(\ell_1, 0,1) + f(\ell_1,0,0)\\
&\quad+\tanh(a_3)(f(\ell_1,\ell_2,\ell_3)-f(\ell_1,\ell_2,0) - f(\ell_1, 0,\ell_3) + f(\ell_1,0,0))).
\end{align*}\endgroup
Note that the symbolic change of odd variables $\ell_2$ and $\ell_3$ to the constant $1$ is only well defined if we use the convention that every instance of $\ell_3\ell_2$ in $f$ is first rewritten as $-\ell_2\ell_3$.
\end{Cor}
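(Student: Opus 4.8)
The plan is to transport the explicit formulas of Theorem \ref{ExplicitK} through the isomorphism $\phi$. First I would make precise the correspondence: if $f\in\Fock$ is written $f = f_1(z_1) + f_2(z_1)z_2 + f_3(z_1)z_3 + f_4(z_1)z_4$, then under $\phi$ the monomials $z_1,z_2,z_3,z_4$ become $2^{-1}\ell_1^2,\ \ell_2\ell_3,\ \ell_1\ell_3,\ \ell_1\ell_2$, so that
\begin{align*}
\phi(f) = f_1(2^{-1}\ell_1^2) + f_2(2^{-1}\ell_1^2)\ell_2\ell_3 + f_3(2^{-1}\ell_1^2)\ell_1\ell_3 + f_4(2^{-1}\ell_1^2)\ell_1\ell_2.
\end{align*}
The point is that $\phi$ intertwines $\rho_0$ on $\Fock$ with the claimed action on $\phi(\Fock)$, i.e.\ $\phi\circ\rho_0(g) = \rho_0(g)\circ\phi$ for $g$ ranging over the five one-parameter subgroups; since by the Cartan decomposition and Corollary \ref{CorProd} these generate the relevant part of $G_0$, it suffices to check each generator separately.

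Next I would verify each generator. For $K_1(k_1)$: Theorem \ref{ExplicitK} replaces $z_3\mapsto\cos(k_1)z_3 - 2^{-1}\sin(k_1)z_4$ and $z_4\mapsto 2\sin(k_1)z_3+\cos(k_1)z_4$; since $z_3\leftrightarrow\ell_1\ell_3$ and $z_4\leftrightarrow\ell_1\ell_2$, factoring out the common $\ell_1$ shows this is exactly the substitution $\ell_2\mapsto\cos(k_1)\ell_2 + 2\sin(k_1)\ell_3$, $\ell_3\mapsto -2^{-1}\sin(k_1)\ell_2 + \cos(k_1)\ell_3$ (note the transpose, as in the passage from Theorem \ref{ExplicitK} to Corollary \ref{ExplicitKActive}). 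For $K_2(k_2)$: scaling $z_1\mapsto e^{-2\imath k_2}z_1$ corresponds to $2^{-1}\ell_1^2\mapsto e^{-2\imath k_2}2^{-1}\ell_1^2$, i.e.\ $\ell_1\mapsto e^{-\imath k_2}\ell_1$, and the scalings $z_3,z_4\mapsto e^{-\imath k_2}z_3,e^{-\imath k_2}z_4$ are then automatically accounted for by the $\ell_1$-scaling together with $\ell_2,\ell_3$ fixed; the prefactor $e^{\imath\alpha k_2}$ is unchanged. Similarly $K_3(k_3)$ gives $z_2\mapsto e^{-2\imath k_3}z_2$, i.e.\ $\ell_2\ell_3\mapsto e^{-2\imath k_3}\ell_2\ell_3$, which is realized by $\ell_2\mapsto e^{-\imath k_3}\ell_2$, $\ell_3\mapsto e^{-\imath k_3}\ell_3$, consistently with $z_3,z_4\mapsto e^{-\imath k_3}z_3,e^{-\imath k_3}z_4$; likewise $A_1(a_1)$ gives $z_3\mapsto e^{-a_1}z_3$, $z_4\mapsto e^{a_1}z_4$, which becomes $\ell_2\mapsto e^{a_1}\ell_2$, $\ell_3\mapsto e^{-a_1}\ell_3$ (the swap of signs coming from $z_3\leftrightarrow\ell_1\ell_3$, $z_4\leftrightarrow\ell_1\ell_2$).

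The only genuinely delicate generator is $A_3(a_3)$, and I expect this to be the main obstacle. Theorem \ref{ExplicitK} gives
\begin{align*}
\rho_0(A_3(a_3))f = (\cosh(a_3)+\sinh(a_3)z_2)\, f\bigl(z_1,\tanh(a_3)+\cosh(a_3)^{-2}z_2,\cosh(a_3)^{-1}z_3,\cosh(a_3)^{-1}z_4\bigr).
\end{align*}
Translating the substitution on $z_2$: the appearance of the constant term $\tanh(a_3)$ means that $f_2(z_1)z_2$ picks up a contribution $\tanh(a_3)f_2(z_1)$ into the $z_1$-independent slot, and this is precisely what produces, under $\phi$, the ``evaluation at $\ell_2,\ell_3\in\{0,1\}$'' terms. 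Concretely I would expand $f = f_1(z_1)+f_2(z_1)z_2+f_3(z_1)z_3+f_4(z_1)z_4$, apply the substitution term by term, and then re-express everything in the $\ell$-variables using $z_2\leftrightarrow\ell_2\ell_3$, $z_3\leftrightarrow\ell_1\ell_3$, $z_4\leftrightarrow\ell_1\ell_2$; the book-keeping of which pieces of $f$ survive the substitution $z_2\mapsto\tanh(a_3)+\cosh(a_3)^{-2}z_2$ is exactly encoded by the inclusion–exclusion combination $f(\ell_1,1,1)-f(\ell_1,1,0)-f(\ell_1,0,1)+f(\ell_1,0,0)$, which extracts the $\ell_2\ell_3$-coefficient of $f$ (that is, $f_2$), and similarly $f(\ell_1,\ell_2,\ell_3)-f(\ell_1,\ell_2,0)-f(\ell_1,0,\ell_3)+f(\ell_1,0,0)$ extracts the $\ell_2\ell_3$-part of $f$ with its variables retained. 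The care needed — and this is why the remark at the end of the statement is included — is that ``setting $\ell_2=\ell_3=1$'' only makes sense after normal-ordering, so I would first fix the convention that every occurrence of $\ell_3\ell_2$ in $f$ is rewritten as $-\ell_2\ell_3$, and then the substitutions $\ell_2\to 1$, $\ell_3\to 1$ are unambiguous linear functionals. After collecting the prefactors $\cosh(a_3)$, $\sinh(a_3)$, $\tanh(a_3)$ correctly, the displayed formula for $\rho_0(A_3(a_3))f$ follows, completing the verification for all five generators and hence the corollary.
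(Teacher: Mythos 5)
Your proposal is correct and matches the paper's (implicit) argument: Corollary \ref{ExplicitKl} is obtained exactly by transporting the formulas of Theorem \ref{ExplicitK} through the isomorphism $\phi$ generator by generator, with the inclusion--exclusion terms for $A_3(a_3)$ extracting the $\ell_2\ell_3$-coefficient produced by the constant shift $\tanh(a_3)$ in the $z_2$-substitution, just as you describe. The only caveat is typographical: in the stated $K_3(k_3)$ formula the arguments $\ell_3,\ell_4$ should read $\ell_2,\ell_3$, and your verification correctly reflects the intended formula.
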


\subsection{The action of $A_2(a_2)$}\label{SSActA2}

For the action $\rho_0(A_2(a_2))$ we were unable to find an explicit form if $\alpha<0$.

For $\alpha>0$ we can write $\Fock$ in terms of a Generalised Laguerre polynomial basis,
\begin{align*}
\Fock &= \left\lbrace g = \exp(-z_1)\left(\sum_{k=0}^\infty (g_{1,k}+g_{2,k}z_2)L_k^{(-1-\alpha)}(2z_1)\right.\right.\\
&\quad \left.\left. + \sum_{k=1}^\infty (g_{3,k}z_3 + g_{4,k}z_4)L_{k-1}^{(-\alpha)}(2z_1)\right) : \norm{g}_S< \infty,\, g_{i,k} \in \ds C \right\rbrace,
\end{align*}
Here
\begin{align*}
L_k^{(a)}(2x) = \dfrac{(-1)^k}{k!}\U(-k, a+1, 2x) = \dfrac{(-1)^k}{k!}\sum_{i=0}^k \dfrac{(-1)^i}{i!}(-a-k)_i (-k)_i (2x)^{k-i}
\end{align*}
are the generalised Laguerre polynomials and $\U(a,b,c)$ is the confluent hypergeometric function of the second kind. Note that this does not define a basis of $\Fock$ if $\alpha<0$, since then $\norm{\exp(-z_1)}_S \not< \infty$. We can now give the actions of $A_2(a_2)$ with respect to this basis.

\begin{Prop}\label{PropA2}
For $\alpha > 0$ we have
\begin{align*}
\rho_0(A_2(a_2))g(z) &= \exp(-z_1)\left(\sum_{k=0}^\infty \exp(a_2(2k-\alpha))(g_{1,k}+g_{2,k}z_2)L_k^{(-1-\alpha)}(2z_1))\right.\\\nonumber
&\quad \left. + \sum_{k=1}^\infty \exp(a_2(2k-\alpha-1))(g_{3,k}z_3 + g_{4,k}z_4)L_{k-1}^{(-\alpha)}(2z_1))\right).
\end{align*}
\end{Prop}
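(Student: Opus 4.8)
The plan is to exploit the definition $\rho_0(A_2(a_2)) = \rho_0(\exp(a_2 H_2)) = \exp(a_2\rol(H_2))$ together with the explicit operator $\rol(H_2) = z_1 - \bessel(z_1)$, and to diagonalise $\rol(H_2)$ in the generalised Laguerre basis of $\Fock$ recalled just before the statement. Recall that $\lambda=\alpha$, so $\bessel(z_1) = (-\alpha + z_1\pt{z_1}+z_3\pt{z_3}+z_4\pt{z_4})\pt{z_1} - 2\alpha z_2\pt{z_3}\pt{z_4}$. First I would observe that $\rol(H_2)$ preserves each of the four ``sectors'' of $\Fock$ spanned respectively by $\{z_1^k\}$, $\{z_1^k z_2\}$, $\{z_1^k z_3\}$ and $\{z_1^k z_4\}$: in $\bessel(z_1)$ the only term that could mix different sectors is $-2\alpha z_2\pt{z_3}\pt{z_4}$, and it annihilates each of these representatives, so no reduction modulo $\mc I_\alpha$ is needed; on each sector $\rol(H_2)$ then collapses to an ordinary differential operator in $z_1$, namely $f\mapsto z_1 f + \alpha f' - z_1 f''$ on the $z_1$- and $z_1 z_2$-sectors, and $g\mapsto z_1 g - (1-\alpha)g' - z_1 g''$ on the $z_1 z_3$- and $z_1 z_4$-sectors.

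The key computation is then to substitute $f(z_1) = e^{-z_1}h(2z_1)$ (equivalently, conjugate $\rol(H_2)$ by multiplication with $e^{-z_1}$ — this is precisely why the factor $\exp(-z_1)$ appears in the basis) and pass to the variable $w = 2z_1$. A short calculation turns the operator on the first two sectors into $h\mapsto -2\bigl(w h'' - (w+\alpha)h' + \tfrac{\alpha}{2}h\bigr)$, which is the generalised Laguerre operator with parameter $-1-\alpha$; hence $e^{-z_1}L_k^{(-1-\alpha)}(2z_1)$ and $e^{-z_1}z_2 L_k^{(-1-\alpha)}(2z_1)$ are eigenvectors of $\rol(H_2)$ with eigenvalue $2k-\alpha$. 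The same calculation on the last two sectors gives the Laguerre operator with parameter $-\alpha$, so $e^{-z_1}z_3 L_{k-1}^{(-\alpha)}(2z_1)$ and $e^{-z_1}z_4 L_{k-1}^{(-\alpha)}(2z_1)$ are eigenvectors with eigenvalue $2(k-1)+1-\alpha = 2k-\alpha-1$. The only external input is the classical differential equation $w y'' + (a+1-w)y' + k y = 0$ satisfied by $y = L_k^{(a)}$.

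To conclude, note that for $\alpha > 0$ the functions above form an orthogonal basis of $\Fock$ (this is exactly the description of $\Fock$ recalled before the proposition), and that if $v$ is an eigenvector of $\rol(H_2)$ of eigenvalue $\mu$, then $\exp(a_2\rol(H_2))v = \sum_n \tfrac{a_2^n}{n!}\rol(H_2)^n v = e^{a_2\mu}v$. Applying $\exp(a_2\rol(H_2))$ term by term to the Laguerre expansion of an arbitrary $g\in\Fock$ then yields precisely the displayed formula. I expect the only genuinely delicate point to be the justification of this ``term by term'' step: $\rol(H_2)$, and hence $\exp(a_2\rol(H_2))$, is unbounded (the eigenvalues $2k-\alpha$ grow with $k$), so one must be careful about the domain of $\rho_0(A_2(a_2))$. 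This is handled by first running the argument on the dense subspace of finite linear combinations of the basis vectors, where every manipulation is literally finite, and then reading the displayed formula as the definition of $\rho_0(A_2(a_2))$ on its natural dense domain, consistently with the convention $\rho_0(\exp X) := \exp(\rol X)$.
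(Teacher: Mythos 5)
Your proposal is correct and follows essentially the same route as the paper: both diagonalise $\rho_0(A_2(a_2))=\exp(a_2\rol(H_2))$ by showing that $\exp(-z_1)L_k^{(-1-\alpha)}(2z_1)$ (resp.\ $\exp(-z_1)z_jL_{k-1}^{(-\alpha)}(2z_1)$, $j\in\{3,4\}$) are eigenvectors of $z_1-\bessel(z_1)$ with eigenvalues $2k-\alpha$ (resp.\ $2k-\alpha-1$) and then exponentiating on the Laguerre expansion of $g\in\Fock$. You merely supply details the paper leaves implicit (the reduction to the four sectors, the conjugation by $e^{-z_1}$ and the Laguerre ODE verifying the eigenvalue identities, and the term-by-term justification), and these check out.
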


\begin{proof}
We have
\begin{align*}
\rho_0(A_2(a_2)) &= \exp(a_2\rol(H_2)) = \exp(a_2(z_1-\bessel(z_1)))\\
&= \exp(a_2(z_1+(\alpha-z_1\pt{z_1}-z_3\pt{z_3}-z_4\pt{z_4})\pt{z_1}))\\
&= \sum_{i=0}^\infty \dfrac{a_2^i}{i!}D^i,
\end{align*}
with
\begin{align*}
D = z_1+(\alpha-z_1\pt{z_1}-z_3\pt{z_3}-z_4\pt{z_4})\pt{z_1}.
\end{align*}
Since
\begin{align*}
D(\exp(-z_1)L_k^{(-1-\alpha)}(2z_1)) = (2k-\alpha)\exp(-z_1)L_k^{(-1-\alpha)}(2z_1)
\end{align*}
and
\begin{align*}
D(\exp(-z_1)z_jL_{k-1}^{(-\alpha)}(2z_1)) = (2k-\alpha-1)\exp(-z_1)z_jL_{k-1}^{(-\alpha)}(2z_1),
\end{align*}
for $j\in\{3,4\}$, we obtain the desired result.
\end{proof}

Despite not having an explicit form $\alpha <0$, we can show that this action is unitary if and only if $\alpha <0$.

\begin{Prop}\label{Prop_A2_Unitary}
The action $\rho_0(A_2(a_2))$ is a unitary operator on $(\Fock, \bfip{\cdot\, ,\cdot})$ for all $a_2\in \ds R$ if and only if $\alpha <0$.
\end{Prop}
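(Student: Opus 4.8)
The plan is to analyze the operator $\rho_0(A_2(a_2)) = \exp(a_2 \rol(H_2))$ directly through its action on a well-chosen orthogonal basis of $\Fock$, and to compare the growth of norms under this action with the finiteness condition $\norm{\cdot}_S < \infty$ that characterizes $\Fock$. The natural basis to use is $\{z_1^k\}_{k\in\N}$ together with $\{z_1^k z_2\}$, $\{z_1^k z_3\}$, $\{z_1^k z_4\}$, whose $\ipS{\cdot\,,\cdot}$-norms are recorded in the Proposition preceding the Definition of $\Fock$; these are $k!|(-\alpha)_k|$ and $2k!|(-\alpha)_{k+1}|$. First I would observe that $\rho_0(A_2(a_2))$ preserves each of the four ``columns'' (the subspaces spanned by $z_1^k$, $z_1^k z_2$, $z_1^k z_3$, $z_1^k z_4$ respectively) since $\rol(H_2) = z_1 - \bessel(z_1)$ acts within each such subspace — one checks this from the explicit form of $\bessel(z_1)$, which changes the $z_1$-degree by $\pm 1$ but leaves the $z_2,z_3,z_4$-content untouched. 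So it suffices to work on, say, the even column $W := \overline{\mathrm{span}}\{z_1^k : k\in\N\}$, where the operator $D = z_1 + (\alpha - z_1\pt{z_1})\pt{z_1}$ acts; the three remaining columns are handled identically with $\alpha$ replaced by $\alpha \pm$ a shift.

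On $W$ the key computation is that $D$ is conjugate, via $z_1^k \mapsto$ (suitably normalized) generalized Laguerre polynomial, to a diagonal operator with eigenvalues $2k - \alpha$, exactly as in the proof of Proposition \ref{PropA2} — except that the change of basis $z_1^k \leftrightarrow \exp(-z_1)L_k^{(-1-\alpha)}(2z_1)$ is only a genuine (bounded-with-bounded-inverse) change of orthogonal basis of $\Fock$ when $\alpha > 0$. For $\alpha < 0$ the Laguerre vectors $\exp(-z_1)L_k^{(-1-\alpha)}(2z_1)$ fail to lie in $\Fock$ (their $\norm{\cdot}_S$ diverges), so the spectral picture is different. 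The clean way to treat both cases uniformly is: since $\rol(H_2)$ is symmetric for the Bessel–Fischer product $\bfip{\cdot\,,\cdot}$ (this is part of what \cite{BC3} establishes — invariance of the superhermitian form), the question of whether $\exp(a_2 \rol(H_2))$ is $\bfip{\cdot\,,\cdot}$-unitary (equivalently, bounded on $\Fock$ for all real $a_2$, since $S$-boundedness and $\bfip{\cdot\,,\cdot}$-unitarity coincide here up to the fundamental symmetry) reduces to whether the densely defined symmetric operator $\rol(H_2)$ is essentially self-adjoint and, more to the point, whether $\exp(a_2\rol(H_2))$ maps $\pFock$ into bounded orbits.

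Concretely I would proceed as follows. For the ``if'' direction ($\alpha < 0$ implies unitary): it suffices to exhibit, for each column, an orthonormal eigenbasis of $\rol(H_2)$ inside $\Fock$ with real eigenvalues, whence $\exp(a_2\rol(H_2))$ is visibly unitary. When $\alpha<0$ one still has a three-term recurrence for $D$ on $W$; I would identify the $L^2$-type weight making $D$ self-adjoint and show the eigenfunctions — now a second family of Laguerre-type / confluent hypergeometric solutions, the ones that ARE in $\Fock$ — form an orthonormal basis. Alternatively, and more robustly, use that on $\Fock$ the column $W$ with its $\ipS$-inner product is isometric to the weighted sequence space $\ell^2(\N, k!|(-\alpha)_k|)$; the operator $D$ becomes a Jacobi (tridiagonal) matrix whose off-diagonal and diagonal entries one writes down explicitly from $D z_1^k = z_1^{k+1} + (\alpha-k)\, k\, z_1^{k-1} + \ldots$; a Carleman-type estimate on these entries gives essential self-adjointness, hence $\exp(ita_2 D)$-unitarity of the closure, hence — since the entries are real — unitarity of $\exp(a_2 D)$ for real $a_2$ precisely when the matrix is self-adjoint, which the sign of $\alpha$ controls through whether the deficiency indices vanish. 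For the ``only if'' direction ($\alpha > 0$ implies non-unitary): here Proposition \ref{PropA2} gives the action explicitly with eigenvalues $\exp(a_2(2k-\alpha))$ on an orthonormal basis, and these are unbounded in $k$ for every $a_2 \neq 0$; so $\rho_0(A_2(a_2))$ is an unbounded operator, a fortiori not unitary. (The case $\alpha = 0$ is excluded throughout.)

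The main obstacle I anticipate is the ``if'' direction for $\alpha<0$: one must actually produce the correct orthonormal eigenbasis of $\rol(H_2)$ in $\Fock$ — the ``second solution'' family that the Laguerre polynomials $L_k^{(-1-\alpha)}$ do not see — or else carry out the Jacobi-matrix self-adjointness estimate carefully, checking that the Carleman/limit-point criterion applies with the right constants and that the real-symmetric matrix has no deficiency for $\alpha<0$ while it does for $\alpha>0$. Establishing that dichotomy rigorously (rather than just computing eigenvalues as in the $\alpha>0$ case) is where the real work lies; everything else is bookkeeping across the four columns and invoking the already-proved symmetry of $\rol(H_2)$ for $\bfip{\cdot\,,\cdot}$.
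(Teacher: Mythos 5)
Your $\alpha>0$ half is essentially the paper's argument and is fine: the vectors $\exp(-z_1)L_k^{(-1-\alpha)}(2z_1)$ lie in $\Fock$ precisely for $\alpha>0$ and are eigenvectors of $\rho_0(A_2(a_2))$ with real eigenvalues $\exp(a_2(2k-\alpha))$ of modulus $\neq 1$, so the operator cannot be unitary. The genuine gap is in the $\alpha<0$ direction, and it is conceptual rather than technical: you claim that an orthonormal eigenbasis of $\rol(H_2)$ in $\Fock$ with \emph{real} eigenvalues would make $\exp(a_2\rol(H_2))$ ``visibly unitary'', and later that essential \emph{self-adjointness} of $D$ yields unitarity of $\exp(a_2 D)$ for real $a_2$. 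This is backwards: a real eigenvalue $\mu\neq 0$ gives $\abs{\exp(a_2\mu)}\neq 1$, which is exactly your own non-unitarity mechanism for $\alpha>0$. What must be proved is \emph{skew}-adjointness of $\rol(H_2)$ for the positive-definite form $\ipS{\cdot\,,\cdot}$ (equivalently, self-adjointness of $\imath\,\rol(H_2)$), so that $a_2\mapsto\exp(a_2\rol(H_2))$ is a one-parameter unitary group. In particular the proposed hunt for a ``second Laguerre-type family'' of eigenvectors of $\rol(H_2)$ inside $\Fock$ is misdirected: for $\alpha<0$ the generator is an unbounded skew-symmetric tridiagonal operator and need not have any eigenvectors in $\Fock$ at all. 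The deficiency-index framing of the dichotomy is also off target: for $\alpha>0$ the failure is not a nonzero deficiency index of a symmetric operator, but the fact that $\rol(H_2)$ is not even skew-symmetric for $\ipS{\cdot\,,\cdot}$, because the fundamental symmetry $S$ involves the $k$-dependent signs $\sgn((-\alpha)_k)$ and therefore no longer commutes with an operator that shifts the $z_1$-degree.

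The paper's proof of the $\alpha<0$ half avoids all spectral analysis. When $\alpha<0$ every $\sgn((-\alpha)_k)$ equals $+1$, so $S$ commutes with $\rho_0(A_2(a_2))$ (which, as you note, preserves the four columns and acts identically on the $z_3$- and $z_4$-columns). Combining this with the skew-supersymmetry of $\rol(H_2)$ for the Bessel--Fischer product from \cite[Proposition 6.3]{BC3} gives $\ipS{\rho_0(A_2(a_2))p,q}=\ipS{p,\rho_0(A_2(-a_2))q}$ for $p,q\in\pFock$, i.e.\ unitarity on the dense polynomial subspace, hence a unitary extension to $\Fock$. Your Jacobi-matrix route could be repaired in this spirit: in the $\ipS{\cdot\,,\cdot}$-orthonormal monomial basis of a column, $\rol(H_2)$ is, for $\alpha<0$, a real \emph{antisymmetric} tridiagonal matrix with off-diagonal entries $\sqrt{(k+1)(k-\alpha)}$, so a Carleman estimate gives essential skew-adjointness and Stone's theorem the unitary group; but that is considerably more machinery than the commutation argument, and as written your key step for $\alpha<0$ is incorrect.
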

\begin{proof}
First assume $\alpha>0$. From Proposition \ref{PropA2} we see that the eigenvalues of $\rho_0(A_2(a_2))$ are of the form $\exp(a)$, with $a\in \ds R$. Since these eigenvalues are not roots of unity, $\rho_0(A_2(a_2))$ can not be a unitary operator on $(\Fock, \bfip{\cdot\, ,\cdot})$.

Now assume $\alpha < 0$. In this case, we can easily see that the Fundamental symmetry $S$ commutes with $\rho_0(A_2(a_2))$. Because of \cite[Proposition 6.3]{BC3} we have
\begin{align*}
\bfip{\rol(H_2)p,q} = - \bfip{p, \rol(H_2)q},
\end{align*}
for $p,q\in \pFock$. This implies
\begin{align*}
(\rho_0(A_2(a_2))p,q)_S &= \bfip{\rho_0(A_2(a_2))p,S(q)} = \bfip{\exp(a_2\rol(H_2))p,S(q)}\\
&= \bfip{p,\exp(-a_2\rol(H_2))S(q)} =\bfip{p,\rho_0(A_2(-a_2))S(q)}\\
&= \bfip{p,S(\rho_0(A_2(-a_2))q)} = (p,\rho_0(A_2(-a_2))q)_S,
\end{align*}
for $p,q\in \pFock$, i.e., $\rho_0(A_2(a_2))$ acts as a unitary operator when acting on $\pFock$. Since $\pFock$ is dense in $\Fock$, we are finished.
\end{proof}

\subsection{Superunitary representations}
\label{subsecton superunitary}

The following definitions can be found in \cite{dGM}.

\begin{Def}
Let $(\mc H_1, \ip{\cdot\, , \cdot}_1)$ and $(\mc H_2, \ip{\cdot\, , \cdot}_2)$ be Hilbert superspaces and suppose $T: \mc H_1 \rightarrow \mc H_2$ is a linear operator. We call $T$ a \textbf{bounded operator} between $\mc H_1$ and $\mc H_2$ if it is continuous with respect to their Hilbert topologies. The set of bounded operators is denoted by $\mc B(\mc H_1, \mc H_2)$ and $\mc B(\mc H_1) := \mc B(\mc H_1, \mc H_1)$.
\end{Def}

\begin{Def}
Let $(\mc H_1, \ip{\cdot\, , \cdot}_1)$ and $(\mc H_2, \ip{\cdot\, , \cdot}_2)$ be Hilbert superspaces and suppose $T\in \mc B(\mc H_1, \mc H_2)$. The \textbf{superadjoint} of $T$ is the operator $T^\dagger\in \mc B(\mc H_2, \mc H_1)$ such that
\begin{align*}
\ip{T^\dagger(x),y}_1 = (-1)^{|T||x|}\ip{x, T(y)}_2,
\end{align*}
for all $x\in \mc H_2$, $y\in \mc H_1$.
\end{Def}

\begin{Def}
Let $(\mc H_1, \ip{\cdot\, , \cdot}_1)$ and $(\mc H_2, \ip{\cdot\, , \cdot}_2)$. A \textbf{superunitary operator} between $\mc H_1$ and $\mc H_2$ is a homogeneous operator $\psi\in\mc B(\mc H_1, \mc H_2)$ of degree $0$ satisfying $\psi^\dagger\psi = \psi\psi^\dagger = \ds 1$. The set of superunitary operators is denoted by $\mc U(\mc H_1, \mc H_2)$ and $\mc U(\mc H_1) := \mc U(\mc H_1, \mc H_1)$.
\end{Def}

\begin{Def}\label{Defsuperunitary}
A \textbf{superunitary representation} of a Lie supergroup $G=(G_0,\mf g)$ is a triple $(\mc H, \pi_0, d\pi)$ such that
\begin{itemize}
\item $\mc H$ is a Hilbert superspace.
\item $\pi_0:G_0\rightarrow \mc U(\mc H)$ is a group morphism.
\item For all $v\in \mc H$, the maps $\pi_0^v : g \mapsto \pi_0(g)v$ are continuous on $G_0$.
\item $d\pi : \mf g \rightarrow \End(\mc H^{\infty})$ is a $\ds R$-Lie superalgebra morphism such that $d\pi = d\pi_0$ on $\mf g_{\ol 0}$, $d\pi$ is skew-supersymmetric with respect to $\left< \cdot \, ,\cdot\right>$ and
\begin{align*}
\pi_0(g)d\pi(X)\pi_0(g)^{-1} = d\pi(\Ad(g)(X)),\quad \text{ for all } g\in G_0 \text{ and } X\in \mf g_{\ol 1}.
\end{align*}
Here $\mc H^\infty$ is the space of smooth vectors of the representation $\pi_0$ and $\Ad$ is the adjoint representation of $G_0$ on $\mf g$.
\end{itemize}
\end{Def}

Using this definition of a superunitary representation we can now prove the following result.

\begin{theorem}\label{ThSuperUnitary}
Assume $\alpha <0$. The triple $((\Fock, \bfip{\cdot \,, \cdot}), \rho_0, \rol)$ is a superunitary representation of $\ds D(2,1;\alpha)$.
\end{theorem}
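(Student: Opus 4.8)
The plan is to verify each of the four bullet points in Definition \ref{Defsuperunitary} in turn, leveraging the explicit formulas already established. For the first bullet, that $(\Fock, \bfip{\cdot\,,\cdot})$ is a Hilbert superspace, this is already established in Section \ref{Section Fock space}. For the second bullet, that $\rho_0 : G_0 \to \mc U(\Fock)$ is a group morphism, the fact that it is a group morphism follows from the construction $\rho_0(\exp X) = \exp(\rol(X))$ together with Corollary \ref{CorProd}; what must still be checked is that each $\rho_0(g)$ is a \emph{superunitary} operator. By the Cartan decomposition of each $SL(V_i)$ it suffices to check this on the generators $K_i(k_i)$, $A_i(a_i)$. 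For $A_2(a_2)$ this is precisely Proposition \ref{Prop_A2_Unitary} (using $\alpha<0$). For the remaining generators $K_1,K_2,K_3,A_1,A_3$ one reads off from the explicit formulas in Theorem \ref{ExplicitK} (or, more transparently, the matrix forms in Corollary \ref{ExplicitKActive}) that each acts by an invertible operator; to see it is superunitary one checks directly that it preserves $\bfip{\cdot\,,\cdot}$ on the basis $\{z_1^k, z_1^kz_2, z_1^kz_3, z_1^kz_4\}$ using the explicit values in Proposition \ref{PropBFPexplicit}. In fact it is cleaner to verify that each generator commutes with the fundamental symmetry $S$ (or is $S$-unitary in the sense of Proposition \ref{Prop_A2_Unitary}) and is an isometry for $\ipS{\cdot\,,\cdot}$, which simultaneously gives boundedness and superunitarity.

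For the third bullet, continuity of the orbit maps $g \mapsto \rho_0(g)v$, it suffices by the group morphism property and the decomposition $G_0 = \prod K_i A_i K_i$ to check continuity in each one-parameter subgroup; here one uses that in the explicit formulas the dependence on each parameter $k_i$ or $a_i$ is through entire functions ($\cos, \sin, \cosh, \sinh, \exp$) acting coefficientwise, so strong continuity (indeed norm continuity on the dense subspace $\pFock$, extended by uniform boundedness on compact parameter intervals) is immediate. For the fourth bullet, one must check that $\rol : \mf g \to \End(\Fock^\infty)$ lands in the smooth vectors, is an $\R$-Lie superalgebra morphism, agrees with $d\rho_0$ on $\mf g_{\ol 0}$, is skew-supersymmetric with respect to $\bfip{\cdot\,,\cdot}$, and satisfies the $\Ad$-intertwining relation on $\mf g_{\ol 1}$. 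That $\rol$ is a Lie superalgebra morphism and that $\rol$ is skew-supersymmetric (on $\pFock$, hence on smooth vectors) are exactly the content of \cite{BC3}, specifically the analogue of Proposition 6.3; agreement with $d\rho_0$ on $\mf g_{\ol 0}$ is the definition of $\rho_0$; and polynomials are manifestly smooth vectors so $\pFock \subseteq \Fock^\infty$.

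The intertwining relation $\rho_0(g)\rol(X)\rho_0(g)^{-1} = \rol(\Ad(g)X)$ for $X \in \mf g_{\ol 1}$ is the step requiring the most work, but it too reduces to the generators: because both sides are (anti)homomorphic in $g$ and $\Ad$ is a homomorphism, it is enough to verify it for $g \in \{K_i(k_i), A_i(a_i)\}$ and $X$ ranging over the odd root vectors $u_{\pm\pm\pm}$. For $g = \exp(tX_0)$ with $X_0 \in \mf g_{\ol 0}$, differentiating at $t=0$ recovers $[\rol(X_0),\rol(X)] = \rol([X_0,X])$, which already holds; the finite-$t$ statement then follows by the standard ODE/uniqueness argument (both $t \mapsto \rho_0(\exp tX_0)\rol(X)\rho_0(\exp -tX_0)$ and $t \mapsto \rol(\Ad(\exp tX_0)X)$ solve the same linear ODE with the same initial value, on the dense invariant subspace $\pFock$). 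The only genuine subtlety is that $A_2(a_2)$ appears among the generators and we lack a closed form for $\rho_0(A_2(a_2))$ when $\alpha<0$; here one argues on $\pFock$, where $\rho_0(A_2(a_2)) = \exp(a_2\rol(H_2))$ is a well-defined operator (the exponential series applied to a locally nilpotent-on-graded-pieces operator converges), the ODE argument applies verbatim, and one then extends to $\Fock^\infty$ by density together with the boundedness of $\rho_0(A_2(a_2))$ from Proposition \ref{Prop_A2_Unitary}. I expect this interplay between the $A_2$-action, for which no explicit formula is available, and the need to establish the $\Ad$-intertwining and superunitarity uniformly over $G_0$, to be the main obstacle; everything else is bookkeeping on the explicit generator formulas.
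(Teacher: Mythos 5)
Your overall architecture is the same as the paper's (verify the four bullets of Definition \ref{Defsuperunitary}, reduce to the one-parameter subgroups $K_i(k_i)$, $A_i(a_i)$ via Corollary \ref{CorProd} and the Cartan decomposition), but two of your concrete steps fail. First, the ``cleaner'' route to superunitarity-plus-boundedness is wrong: the generators are in general \emph{not} isometries of $\ipS{\cdot\,,\cdot}$ and do not commute with $S$. For instance $\rho_0(A_1(a_1))$ multiplies the $z_1^kz_3$-component by $e^{-a_1}$ and the $z_1^kz_4$-component by $e^{a_1}$, which preserves the pairing $\bfip{z_1^kz_3,z_1^kz_4}$ but scales $\ipS{z_1^kz_3,z_1^kz_3}$ by $e^{-2a_1}$; likewise $\rho_0(A_3(a_3))$ is a hyperbolic rotation mixing the $z_1^k$- and $z_1^kz_2$-components, which preserves the indefinite Bessel--Fischer form (those two lines carry opposite signs) but not the positive definite $S$-form, and even $\rho_0(K_1(k_1))$ fails to be an $S$-isometry because of the asymmetric factors $2$ and $2^{-1}$. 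These operators are superunitary and bounded (with $S$-operator norm of order $e^{|a|}$), but boundedness then needs its own short estimate rather than following from a nonexistent $S$-isometry. The paper avoids the generator-by-generator check of the superadjoint altogether: skew-supersymmetry of $\rol$ from \cite[Proposition~6.3]{BC3} gives $\bfip{\rho_0(\exp X)p,q}=(-1)^{|X||p|}\bfip{p,\rho_0(\exp(-X))q}$ for all $X\in\mf g_{\ol 0}$ at once, identifying the superadjoint of $\rho_0(\exp(X_1)\cdots\exp(X_n))$ as $\rho_0(\exp(-X_n)\cdots\exp(-X_1))$.

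The more serious gap is your treatment of $A_2$. The operator $\rol(H_2)=z_1-\bessel(z_1)$ contains multiplication by $z_1$, so it is not ``locally nilpotent on graded pieces'': applied to a polynomial, $\exp(a_2\rol(H_2))$ produces a genuinely infinite series, and its convergence in $\Fock$ is not justified by your argument. More importantly, your continuity argument for the orbit maps rests on the parameter entering through entire functions in an \emph{explicit} formula, and no such formula exists for $\rho_0(A_2(a_2))$ when $\alpha<0$ (Subsection \ref{SSActA2}); so the third bullet of Definition \ref{Defsuperunitary} is left unproved in exactly the direction where the hypothesis $\alpha<0$ is actually used. The paper handles this via Proposition \ref{Prop_A2_Unitary}: for $\alpha<0$ the fundamental symmetry $S$ commutes with $\rho_0(A_2(a_2))$, so these operators are unitary for $\ipS{\cdot\,,\cdot}$, and continuity is deduced from that, while for the five remaining one-parameter subgroups it proves $\norm{\rho_0(X_i(\delta))f-f}_S\to 0$ by direct coefficientwise norm computations on all of $\Fock$ (with dominated convergence for $K_2$). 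Your dense-subspace-plus-local-uniform-boundedness scheme is a perfectly good alternative for $K_1,K_2,K_3,A_1,A_3$, but for $A_2$ you must either import the paper's unitarity argument (and still say why $a_2\mapsto\rho_0(A_2(a_2))f$ is continuous, e.g.\ via a suitable dense set of vectors on which the action is controlled) or supply another mechanism. The remaining points --- agreement of $\rol$ with $d\rho_0$ on $\mf g_{\ol 0}$, skew-supersymmetry via \cite[Proposition~6.3]{BC3}, and the $\Ad$-intertwining relation, which the paper gets in one line from $e^{A}Be^{-A}=e^{\operatorname{ad}A}B$ rather than your ODE-uniqueness argument --- are fine.
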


\begin{proof}
Thanks to Corollary \ref{CorProd}, we only need to consider the representation $\rho_0$ on elements of the form $g=\exp(X_1)\cdots\exp(X_n)\in G_0$, with $X_i\in \mf{g}_{\ol 0}$ and $n\in \ds N$. We now prove the different conditions of Definition \ref{Defsuperunitary}.
\begin{itemize}
\item $(\Fock, \bfip{\cdot \,, \cdot})$ is a Hilbert superspace:\\
This follows from the definitions.
\item $\rho_0:G_0\rightarrow \mc U(\Fock)$ is a group morphism:\\
We wish to prove that $\rho_0(\exp(X_1)\cdots\exp(X_n))$ is a superunitary operator of $\Fock$. Because of \cite[Proposition 6.3]{BC3} we have
\begin{align*}
\bfip{\rho_0(\exp(X))p, q} =(-1)^{|X||p|} \bfip{p, \rho_0(\exp(-X))q},
\end{align*} 
which implies that the superadjoint of $\rho_0(\exp(X_1)\cdots\exp(X_n))$ is given by $\rho_0(\exp(-X_n)\cdots\exp(-X_1))$ and therefore it is a superunitary operator of $\Fock$.
\item For all $f\in \Fock$, the maps $\rho_0^f : g \mapsto \rho_0(g)f$ are continuous on $G_0$:\\
We need to prove the following
\begin{align*}
(\forall g\in G_0)(\forall \epsilon>0)(\exists U \text{ neighborhood of } g)(h\in U \implies ||\rho_0^f(g)-\rho_0^f(h)||_S<\epsilon).
\end{align*}
Since \[U_r := \left\lbrace\prod_{i=1}^3 K_i(k_i)A_i(a_i)K_i(k_i')g: \sum_{i=1}^{3} |k_i| + |a_i| + |k_i'| < r\right\rbrace\] is a neighbourhood of $g$ for all $r>0$, it suffices to prove
\begin{align*}
&(\forall g\in SL(V_i))(\forall \epsilon>0)(\exists \delta>0)(||\rho_0(X_i(\delta))f-f||_S<\epsilon),
\end{align*}
for $i\in\{1,2,3\}$ and $X_i \in \{K_i, A_i\}$.

For $A_2$ we know from Proposition \ref{Prop_A2_Unitary} that the actions is unitary if $\alpha <0$. Since unitarity implies continuity, we are done.

For $K_3$ we have
\begin{align*}
\rho_0(K_3(\delta))f &= e^{\imath \delta}\sum_{k=0}^\infty z_1^{k-1}(f_{1,k}z_1 + e^{-2\imath \delta}f_{2,k}z_2 + e^{-\imath \delta}(f_{3,k}z_3 + f_{4,k}z_4))\\
&=\sum_{k=0}^\infty z_1^{k-1}(e^{\imath \delta}f_{1,k}z_1 + e^{-\imath \delta}f_{2,k}z_2 +   f_{3,k}z_3 +   f_{4,k}z_4)
\end{align*}
and therefore
\begin{align*}
\norm{\rho_0(K_3(\delta))  f-  f}_S^2 &= \norm{\sum_{k=0}^\infty z_1^{k-1}((e^{\imath \delta}-1) f_{1,k}z_1 + (e^{-\imath \delta}-1) f_{2,k}z_2)}_S^2\\
&= (2-e^{\imath\delta}-e^{-\imath\delta})\\
&\quad \times\sum_{k=0}^{\infty} k!|(-\alpha)_k|| f_{1,k}|^2+(k-1)!|(-\alpha)_{k-1}|| f_{2,k}|^2,
\end{align*}
which goes to 0 as $\delta$ goes to $0$.

For $A_3$ we have
\begin{align*}
\rho_0(A_3(\delta))f -f &= \sum_{k=0}^\infty z_1^{k-1} (((\cosh(\delta)-1)f_{1,k}+\sinh(\delta)f_{2,k+1})z_1\\
&\quad + ( (\cosh(\delta)-1)f_{2,k} + \sinh(\delta)f_{1,k-1} )z_2)
\end{align*}
and therefore
\begin{align*}
\norm{\rho_0(A_3(\delta))  f-  f}_S^2 &= \sum_{k=0}^\infty |(\cosh(\delta)-1)f_{1,k}+\sinh(\delta)f_{2,k+1}|^2|(-\alpha)_k|k!\\
&\quad + |(\cosh(\delta)-1)f_{2,k} + \sinh(\delta)f_{1,k-1}|^2|(-\alpha)_{k-1}|(k-1)!\\
&\leq 2(\cosh(\delta)-1)^2\sum_{k=0}^\infty |f_{1,k}|^2|(-\alpha)_k|k!\\
&\quad +2\sinh(\delta)^2\sum_{k=0}^\infty |f_{2,k+1}|^2|(-\alpha)_k|k!\\
&\quad +2(\cosh(\delta)-1)^2 \sum_{k=1}^\infty|f_{2,k}|^2|(-\alpha)_{k-1}|(k-1)!\\
&\quad +2\sinh(\delta)^2\sum_{k=1}^\infty |f_{1,k-1}|^2|(-\alpha)_{k-1}|(k-1)!
\end{align*}
which goes to 0 as $\delta$ goes to $0$.

For $K_2$ we have
\begin{align*}
\rho_0(K_2(\delta))f -f &= \sum_{k=0}^{\infty}(e^{\imath \delta(\alpha-2k)}-1)(z_1^{k}f_{1,k} + z_2z_1^kf_{2,k+1})\\
&\quad + \sum_{k=0}^{\infty}(e^{\imath \delta(\alpha-2k-1)}-1)(z_3z_1^{k}f_{3,k+1} + z_4z_1^k f_{4,k+1})
\end{align*}
and therefore
\begin{align*}
&\norm{\rho_0(K_2(\delta))  f-  f}_S^2\\
&\quad= \sum_{k=0}^{\infty}2(1-\cos(\delta(\alpha-2k)))(|f_{1,k}|^2 + |f_{2,k+1}|^2)k!|(-\alpha)_k|\\
&\quad + \sum_{k=0}^{\infty}4(1-\cos(\delta(\alpha-2k-1)))(|f_{3,k+1}|^2 + |f_{4,k+1}|^2)k!|(-\alpha)_{k+1}|\\
&\quad\leq 4\sum_{k=0}^{\infty}(|f_{1,k}|^2 + |f_{2,k+1}|^2)k!|(-\alpha)_k| +8 \sum_{k=0}^{\infty}(|f_{3,k+1}|^2 + |f_{4,k+1}|^2)k!|(-\alpha)_{k+1}|\\
&\quad = 4\norm{f}_{S}^2
\end{align*}
Using Lebesgue's dominated convergence theorem we now find
\begin{align*}
&\lim_{\delta\rightarrow 0}\norm{\rho_0(K_2(\delta))  f-  f}_S^2\\
&\quad= \sum_{k=0}^{\infty}2\lim_{\delta\rightarrow 0}(1-\cos(\delta(\alpha-2k)))(|f_{1,k}|^2 + |f_{2,k+1}|^2)k!|(-\alpha)_k|\\
&\quad + \sum_{k=0}^{\infty}4\lim_{\delta\rightarrow 0}(1-\cos(\delta(\alpha-2k-1)))(|f_{3,k+1}|^2 + |f_{4,k+1}|^2)k!|(-\alpha)_{k+1}|\\
&\quad = 0,
\end{align*}
as desired.

Lastly, the $K_1$ and $A_1$ cases are analogous to the $K_3$ and $A_3$ cases.
\item $\rol : \mf g \rightarrow \End(\pFock)$ is a $\ds R$-Lie superalgebra morphism such that
\begin{itemize}
\item[(i)] $\rol = \rol_0$ on $\mf g_{\ol 0}$,
\item[(ii)] $\rol$ is skew-supersymmetric with respect to $\left< \cdot \, ,\cdot\right>$ and
\item[(iii)] $\rho_0(g)\rol(X)\rho_0(g)^{-1} = \rol(\Ad(g)(X))$, for all $g\in G_0$ and $X\in \mf g_{\ol 1}$:
\end{itemize}
Item (i) follows from
\begin{align*}
\rol_0(X)p = \left.\dfrac{d}{dt}\rho_0(\exp(tX))p\right|_{t=0} = \left.\dfrac{d}{dt}\exp(t\rol(X))p\right|_{t=0} = \rol(X)p,
\end{align*}
for all $p\in \pFock$ and $X\in \mf g_{\ol 0}$. Item (ii) follows directly from \cite[Proposition 6.3]{BC3} and item (iii) follows from
\begin{align*}
\rho_0(\exp(Y))\rol(X)\rho_0(\exp(Y))^{-1} &= \rho_0(\exp(Y))\rol(X)\rho_0(\exp(-Y))\\
&= \exp(\rol(Y))\rol(X)\exp(\rol(-Y))\\
&= \rol(\exp(Y)X\exp(-Y))\\
&= \rol(\Ad(\exp(Y))(X)),
\end{align*}
for all $X\in \mf g_{\ol 1}$ and $Y\in \mf g_{\ol 0}$.\qedhere
\end{itemize}
\end{proof}

The assumption $\alpha <0$ is only used to prove the continuity of $\rho_0(A_2(\delta))$. Note that Proposition \ref{Prop_A2_Unitary} only implies that the actions are not unitary if $\alpha>0$. It tells us nothing about the continuity in this case. It is possible that Theorem \ref{ThSuperUnitary} holds even without the assumption $\alpha<0$.

From the discussion in Section \ref{ssAddRepr}, we can at least conclude that for every $\alpha$ there always exists a superunitary representation of $\ds D(2,1;\alpha)$. Indeed, if $\alpha>0$, we can look at the Fock representation of $D(2,1;-1-\alpha)$ instead of the Fock representation of $D(2,1;\alpha)$.

\subsection{Strong superunitary representation}

In \cite[Section 4.4]{dGM} the notion of a strong superunitary representation is also defined. However, it is easy to prove that our superunitary representation is not a strong superunitary representation.

\begin{Def}\label{Defstrongsuperunitary}
A \textbf{strong superunitary representation} of a Lie supergroup $G=(G_0,\mf g)$ is a superunitary representation $(\mc H, \pi_0, d\pi)$ such that
\begin{itemize}
\item $(\mc H, \pi_0)$ is unitarizable,
\item $(\mc H, \pi_0, d\pi)$ admits a restriction to $(D(G_0),D(\mf g_{\ds R}))$.
\end{itemize}
Here $D(G_0)$ is the connected Lie subgroup of $G_0$ with Lie algebra $[(\mf g_{\ds R})_{\ol 1}, (\mf g_{\ds R})_{\ol 1}]$ and $D(\mf g_{\ds R}) := [(\mf g_{\ds R})_{\ol 1}, (\mf g_{\ds R})_{\ol 1}]\oplus (\mf g_{\ds R})_{\ol 1}$.
\end{Def}

\begin{theorem}
\label{Theorem not strong sur}
There does not exist a fundamental symmetry on $\pFock$ such that $((\Fock, \bfip{\cdot \,, \cdot}), \rho_0)$ is unitarizable. As a consequence $((\Fock, \bfip{\cdot \,, \cdot}), \rho_0, \rol)$ is not a strong superunitary representation.
\end{theorem}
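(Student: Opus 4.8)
The plan is to show that no fundamental symmetry $J$ on $\pFock$ can make $((\Fock,\bfip{\cdot\,,\cdot}),\rho_0)$ unitarizable, by producing a single group element whose $\rho_0$-action cannot be made unitary with respect to \emph{any} inner product of the form $\ipJ{\cdot\,,\cdot}$ associated to a fundamental symmetry. The natural candidate is $A_2(a_2)$, since Proposition \ref{Prop_A2_Unitary} and Proposition \ref{PropA2} already tell us (in the case $\alpha>0$) that $\rho_0(A_2(a_2))$ has real non-unit-modulus eigenvalues, which is an obstruction to unitarity for topological reasons; but here $\alpha<0$, so I would instead work directly from $\rho_0(A_2(a_2))=\exp(a_2\rol(H_2))$ and the skew-symmetry relation $\bfip{\rol(H_2)p,q}=-\bfip{p,\rol(H_2)q}$ from \cite[Proposition 6.3]{BC3}. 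The obstruction I want to exploit is that $\rol(H_2)$ is already skew with respect to $\bfip{\cdot\,,\cdot}$, so conjugating by $J$ to pass to $\ipJ{\cdot\,,\cdot}$ would force $J$ to intertwine $\rol(H_2)$ with $-J\rol(H_2)J^{-1}$ in a way incompatible with $J$ being a fundamental symmetry (in particular $J^4=1$, so $J$ has eigenvalues among the fourth roots of unity, hence bounded, while the discrepancy accumulates along the Laguerre/monomial basis).

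Concretely, I would argue as follows. Unitarizability of $(\Fock,\rho_0)$ with respect to a fundamental symmetry $J$ means $\rho_0(g)$ is unitary for all $g\in G_0$ with respect to $\ipJ{\cdot\,,\cdot}$. Applying this to the one-parameter subgroup $A_2(a_2)$ and differentiating at $a_2=0$ shows that $\rol(H_2)$ must be skew-\emph{symmetric} with respect to the \emph{inner product} $\ipJ{\cdot\,,\cdot}$ (not merely the indefinite form). Combining the two skew relations, $\bfip{\rol(H_2)p,q}=-\bfip{p,\rol(H_2)q}$ and $\ipJ{\rol(H_2)p,q}=-\ipJ{p,\rol(H_2)q}=-\bfip{p,J\rol(H_2)q}$, together with $\ipJ{\rol(H_2)p,q}=\bfip{\rol(H_2)p,Jq}=-\bfip{p,\rol(H_2)Jq}$, yields $\rol(H_2)J=J\rol(H_2)$ on $\pFock$. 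Since the monomials $z_1^k,z_1^kz_2$ (and their odd analogues) are not eigenvectors of $\rol(H_2)=z_1-\bessel(z_1)$ but the Laguerre vectors $\exp(-z_1)L_k^{(-1-\alpha)}(2z_1)$, etc., are (with distinct eigenvalues $2k-\alpha$), $J$ must be diagonal in the Laguerre basis — but for $\alpha<0$ that basis does not lie in $\Fock$, so one must instead track the action on $\pFock$ directly and show the commutation forces $J$ to have unbounded "eigenvalue growth" contradicting $J^4=1$ and boundedness of $J$ on $(\Fock,\ipS{\cdot\,,\cdot})$.

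An alternative, and probably cleaner, route: use the $\mf{sl}(2)$-triple $\{E_2+E_3,H_2+H_3,F_2+F_3\}$ and the non-compact generator $H_2$. The key point is that a fundamental symmetry $J$ is a \emph{bounded} operator on $\Fock$ (with bounded inverse, as $J^4=1$), so if $\rho_0$ were $\ipJ{\cdot\,,\cdot}$-unitary then the whole representation $\rho_0$ would be bounded-conjugate to a unitary representation of $G_0$; in particular $\rho_0$ restricted to the $SL(V_2)$-factor would be bounded-conjugate to a unitary representation of $SL(2,\R)$. But the explicit action shows $\rho_0|_{A_2}$ has a spectrum (in the $\alpha>0$ picture, real spectrum $\{e^{a_2(2k-\alpha)}\}$; in general the generator $\rol(H_2)$ is \emph{self-adjoint} rather than skew-adjoint with respect to a genuine inner product that would make $K_2$ compact), and $\rol(H_2)$ together with $\rol(E_2-F_2)$ and $\rol(E_2+F_2)$ close into $\mf{sl}(2,\R)$ acting by unbounded self-adjoint/skew operators with \emph{unbounded} discrete $H_2$-spectrum that is not conjugate into any compact form. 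Hence no bounded $J$ can absorb the non-unitarity. The main obstacle is making the "unbounded $J$" contradiction rigorous: one must be careful that $J$ need only be a bounded operator on the Hilbert space $\Fock$, not a nice operator on $\pFock$, so the cleanest argument is the spectral/eigenvalue one — exhibit an explicit $g$ (e.g.\ a suitable $A_2(a_2)$, or, to avoid the $\alpha<0$ subtlety about the Laguerre basis, a product whose $\rho_0$-action has an eigenvalue of modulus $\neq 1$ on a vector in $\pFock$) and invoke that a bounded-invertible conjugate of a unitary operator has spectrum on the unit circle, contradicting the computed spectrum. The final sentence of the theorem is then immediate: failing unitarizability, the first bullet of Definition \ref{Defstrongsuperunitary} fails, so $((\Fock,\bfip{\cdot\,,\cdot}),\rho_0,\rol)$ is not a strong superunitary representation.
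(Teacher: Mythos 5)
There is a genuine gap: you have located the obstruction in the wrong $SL(2)$-factor. Both of your routes are built around $A_2(a_2)$ and $\rol(H_2)$, but for $\alpha<0$ --- the case of main interest, since that is when the superunitary representation exists --- the second factor carries no obstruction at all. Indeed, Proposition \ref{Prop_A2_Unitary} (and its proof) shows that for $\alpha<0$ the fundamental symmetry $S$ commutes with $\rho_0(A_2(a_2))$ and that $\rho_0(A_2(a_2))$ is unitary with respect to the positive definite form $\ipS{\cdot\,,\cdot}$; likewise $K_2$ acts by unimodular phases. So your first route collapses: the commutation relation $\rol(H_2)J=J\rol(H_2)$ that you derive cannot force any contradiction with boundedness or $J^4=1$, because $J=S$ is a bounded fundamental symmetry satisfying exactly this relation. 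Your second, ``spectral'' route fails for the same reason: the premise that $\rho_0(A_2(a_2))$ (or some element of the $SL(V_2)$-factor) has an eigenvalue of modulus $\neq 1$ is only true in the $\alpha>0$ regime of Proposition \ref{PropA2}; for $\alpha<0$ the restriction of $\rho_0$ to this factor is genuinely $\ipS{\cdot\,,\cdot}$-unitary, and the heuristic that a noncompact $\mf{sl}(2,\ds R)$-action ``cannot be conjugated into a compact form'' is not an argument --- $SL(2,\ds R)$ has many infinite-dimensional unitary representations. Your hedge about exhibiting ``a product whose $\rho_0$-action has an eigenvalue of modulus $\neq 1$ on a vector in $\pFock$'' is the right idea, but you never produce such an element, and none exists inside the factor you chose when $\alpha<0$.

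The obstruction actually lives elsewhere, where the Bessel--Fischer form is indefinite on vectors that a noncompact one-parameter subgroup mixes or rescales. The paper argues infinitesimally: using the constraints of Proposition \ref{PropFunSym} on an arbitrary fundamental symmetry $J$, it tests skew-symmetry of $\rol(E_3+F_3)$ (which acts like $-\imath(z_2-\pt{z_2})$) with respect to $\ipJ{\cdot\,,\cdot}$ on $p=z_2$, $q=1$, and obtains $\epsilon_{1,0}+\epsilon_{2,0}=0$, contradicting positivity; here it is essential that $\bfip{1,1}=1>0$ while $\bfip{z_2,z_2}=-1<0$. Your eigenvalue strategy can be salvaged just as cheaply, but with a different subgroup: by Theorem \ref{ExplicitK}, $\rho_0(A_1(a_1))z_3=e^{-a_1}z_3$ and $\rho_0(A_3(a_3))(1+z_2)=e^{a_3}(1+z_2)$, so $z_3$ (resp.\ $1+z_2$) is an explicit eigenvector in $\pFock$ with real eigenvalue of modulus $\neq 1$, which is incompatible with unitarity for \emph{any} positive definite inner product, in particular any $\ipJ{\cdot\,,\cdot}$. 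With either repair, your concluding step is fine: failure of unitarizability violates the first condition of Definition \ref{Defstrongsuperunitary}, so the representation is not a strong superunitary representation.
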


\begin{proof}
Let $J$ be an arbitrary fundamental symmetry on $\pFock$. Thanks to Proposition \ref{PropFunSym} we may assume
\begin{align*}
J(z_1^k)_{z_1^k} &= \epsilon_{1,k}\sgn((-\alpha)_k), & J(z_1^kz_2)_{z_1^kz_2} &= -\epsilon_{2,k}\sgn((-\alpha)_k),\\
J(z_1^kz_3)_{z_1^kz_4} &= \epsilon_{3,k}\sgn((-\alpha)_{k+1}), & J(z_1^kz_4)_{z_1^kz_3} &= -\epsilon_{4,k}\sgn((-\alpha)_{k+1}),
\end{align*}
with $\epsilon_{i,k}>0$ for all $k\in\ds N$ and $i\in \{1,2,3,4\}$. Suppose $((\Fock, \bfip{\cdot \,, \cdot}), \rho_0)$ is unitarizable, then the inner product on $\pFock$ should be invariant under the derived action of $\rho_0$, i.e.,
\begin{align}\label{EqInv}
\ipJ{d\rho(X)p, q} = - \ipJ{p, d\rho(X)q}
\end{align}
for all $X\in D(2,1;\alpha)_{\ol{0}}$, $p,q\in \pFock$. Set $X = E_3+F_3$, $p=z_2$ and $q=1$. Then $d\rho(X) = -\imath(z_2-\pt{z_2})$ and the left hand side of equation (\ref{EqInv}) becomes
\begin{align*}
\ipJ{d\rho(X)p,q} = \bfip{\imath, J(1)} = \imath \epsilon_{1,0}
\end{align*}
while the right hand side becomes
\begin{align*}
- \ipJ{p, d\rho(X)q} = - \bfip{z_2, -\imath J(z_2)} = -(\ol{-\imath})\epsilon_{2,0} = -\imath \epsilon_{2,0},
\end{align*}
which implies equation (\ref{EqInv}) holds only if
\begin{align*}
\epsilon_{1,0} + \epsilon_{2,0} = 0.
\end{align*}
Since both $\epsilon_{1,0}$ and $\epsilon_{2,0}$ are greater than zero, this gives us a contradiction.
\end{proof}

\subsection{Harish-Chandra supermodules}
We will end this paper by giving an alternative, non-explicit, way to integrate the algebra representation of $D(2,1,\alpha)$ to group level. 
We do this by using the framework of Harish-Chandra supermodules developed in \cite{Alldridge}. It would be interesting to know if this abstract integration gives the same representation as the explicit integration of Theorem \ref{ExplicitK}, but we were unable to verify this. 

\begin{Def}\cite[Definition 4.1]{Alldridge} Let $V$ be a complex super-vector space, $G = (G_0, \mf g)$ a Lie supergroup and $K$ a maximal compact subgroup of $G_0$. Then $V$ is a \textbf{$(\mf g,K)$-module} if it is a locally finite $K$-representation that has also a compatible $\mf g$-module structure, that is, the derived action of $K$ agrees with the $\Lie(K)$-module structure:
\begin{align*}
d\pi_0(X)(v) = \left.\dfrac{d}{dt}\pi_0(\exp(tX))(v)\right|_{t=0} = d\pi(X)(v)\text{ for all } X\in \Lie(K),\, v\in V
\end{align*}
and
\begin{align*}
\pi_0(k)(d\pi(X)(v)) = d\pi(\Ad(k)(X))(\pi_0(k)(v)), \text{ for all } k\in K, X\in \mf g, v \in V,
\end{align*}
where $\pi_0$ is the $K$-representation and $d\pi$ the $\mf g$-representation. A $(\mf g,K)$-module is a \textbf{Harish–Chandra supermodule} if it is finitely generated over $U(\mf g)$ and is $K$-multiplicity finite.
\end{Def}

The maximal compact subgroup of $SL(V_i)$ is $K_i$. The maximal compact subgroup of $G_0$ is therefore the $3$-Torus $K :=  K_1\times  K_2\times K_3$.

\begin{Prop}\label{PropHC}
The module $\pFock$ is a Harish–Chandra supermodule.
\end{Prop}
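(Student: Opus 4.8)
The plan is to verify the three defining properties of a Harish–Chandra supermodule for $\pFock$ with respect to the maximal compact $K = K_1\times K_2\times K_3$ and the Lie superalgebra $\mf g = D(2,1;\alpha)$. First I would observe that $\pFock$ already carries a $\mf g$-module structure via $\rol$ (recalled from \cite{BC3}) and a $K$-representation via $\rho_0$ restricted to $K$, and that the compatibility conditions between the two — that $d\rho_0 = \rol$ on $\Lie(K)$ and the intertwining relation $\rho_0(k)\rol(X)\rho_0(k)^{-1} = \rol(\Ad(k)(X))$ — are exactly items (i) and (iii) established in the proof of Theorem \ref{ThSuperUnitary}; those were proved there for all $X\in\mf g_{\ol 0}$ and $X\in\mf g_{\ol 1}$ respectively, hence hold in particular for $k\in K$. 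So the bulk of the argument reduces to purely structural facts about $\pFock$ as a $K$-representation.

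Next I would check local finiteness of the $K$-action together with $K$-multiplicity finiteness. Using the explicit form of $\rho_0(K_i(k_i))$ from Theorem \ref{ExplicitK} (equivalently Corollary \ref{ExplicitKActive}), one sees that $K_1$ acts by a rotation mixing only $z_3,z_4$, $K_2$ acts by $\exp(\imath k_2(\alpha - 2\ds E))$ together with a phase on $z_3,z_4$, and $K_3$ acts diagonally with a phase depending on the $z_2$-degree; in particular each monomial $z_1^{k}$, $z_1^k z_2$, $z_1^k z_3$, $z_1^k z_4$ spans a $K$-invariant (or at most two-dimensional, for the $z_3$–$z_4$ pair under $K_1$) subspace, so $\pFock$ decomposes as a direct sum of finite-dimensional $K$-modules — this gives local finiteness. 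For multiplicity finiteness I would decompose $\pFock$ into $K$-isotypic components: reading off the joint eigencharacters of $K_2\times K_3$ (the $z_1$-degree $k$ controls the $K_2$-character, the presence of $z_2,z_3,z_4$ controls the $K_3$- and parity data) shows that each isotypic component has bounded, indeed small, dimension, so $\pFock$ is $K$-multiplicity finite.

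Finally I would check that $\pFock$ is finitely generated over $U(\mf g)$. The natural candidate generator is the constant function $1$ (the lowest-degree element, annihilated by the raising-type operators and on which $K$ acts by a character). Using the explicit formulas for $\rol$ on $\mf g_{\minus}$ and $\mf g_0$ — in particular the Bessel operators $\bessel(z_i)$ and the multiplication-type pieces $z_i + \bessel(z_i)$ — I would show by induction on the $z_1$-degree that repeatedly applying $\rol(E_2)$, $\rol(E_3)$ and the odd raising operators to $1$ produces each of $z_1^k$, $z_1^k z_2$, $z_1^k z_3$, $z_1^k z_4$ up to lower-order corrections already in the span, hence generates all of $\pFock$. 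Concretely, $\rol(H_2) = z_1 - \bessel(z_1)$ and the $\mf g_\plus$ operators supply $z_1, z_2$ modulo lower terms, while the odd elements $u_{\minus\plus\plus}$, $u_{\plus\plus\plus}$ produce $z_3, z_4$; an induction then pushes the $z_1$-degree up.

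The main obstacle I expect is the finite generation step: one must be careful that the Bessel operators $\bessel(z_i)$ genuinely lower degree (so the "correction terms" live in the span of earlier-generated elements) and that the induction closes — i.e., that the four families of monomials at degree $k$ are all reached, not just some of them. This requires a careful bookkeeping of which combination of $\rol(E_2), \rol(E_3), \rol(u_{\minus\plus\plus}), \rol(u_{\plus\plus\plus})$ raises each family, exploiting the explicit expressions above; it is routine but is the one place where the specific structure of the representation (and the exclusion of $\alpha\in\ds N$, which keeps the Bessel–Fischer pairing nondegenerate so that no relations collapse the generation) genuinely enters.
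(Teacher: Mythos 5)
Your proposal is correct and, for most of the statement, follows the same route as the paper: the $(\mf g,K)$-compatibility is exactly the computation already done in the proof of Theorem \ref{ThSuperUnitary} (note only that the intertwining relation in the definition of a $(\mf g,K)$-module is required for all $X\in\mf g$, not just $X\in\mf g_{\ol 1}$; the even case follows from the same one-line exponential computation, so this is a nominal rather than a real gap), and local $K$-finiteness plus $K$-multiplicity finiteness are read off from the explicit torus action of Theorem \ref{ExplicitK} --- the paper does the same, except that for local finiteness it cites the decomposition of \cite[Theorem 6.4]{BC3} rather than re-deriving the weight decomposition from the monomial basis as you do. The genuine difference is your treatment of finite generation over $U(\mf g)$: the paper's proof does not address this condition explicitly (it is implicitly absorbed into the citation of the decomposition in \cite{BC3}), whereas you give a direct argument, generating from the constant $1$ and inducting on the $z_1$-degree. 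That argument does work, and in fact more easily than you anticipate: $\bessel(z_1)$, $\bessel(z_2)$, $\bessel(z_3)$, $\bessel(z_4)$ all strictly lower the $z_1$-degree, so $\rol(H_3)z_1^k=z_1^kz_2$ exactly, the odd $\mf g_0$ elements give $z_1^kz_3$ and $z_1^kz_4$ up to lower-degree terms, and $\rol(E_2)$ raises the $z_1$-degree of each of the four monomial families by one modulo lower-degree terms, so the induction closes with no delicate bookkeeping; also, the nondegeneracy of the Bessel--Fischer product (i.e.\ the restriction $\alpha\notin\ds N$) plays no role in generation, since $F_\lambda$ is a quotient of $\mc P(\C^{2|2})$ regardless. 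So your version buys a self-contained verification of all the defining conditions at the cost of some explicit operator computations, while the paper's version is shorter but leans on the structure results of \cite{BC3}.
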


\begin{proof}
That $\pFock$ is a $(\mf g, K)$-module follows from
\begin{align*}
\rol_0(X)(p) = \left.\dfrac{d}{dt}\rho_0(\exp(tX))(p)\right|_{t=0} = \left.\dfrac{d}{dt}\exp(t\rol(X))(p)\right|_{t=0} = \rol(X)p,
\end{align*}
and
\begin{align*}
\rho_0(\exp(Y))(\rol(X)p) &= \rho_0(\exp(Y))\rol(X) \rho_0(\exp(-Y))\rho_0(\exp(Y))p\\
&= (\exp(\rol(Y))\rol(X) \exp(-\rol(Y)))\rho_0(\exp(Y))p\\
&= \rol((\exp(Y)X \exp(-Y))\rho_0(\exp(Y))p\\
&= \rol((\Ad(\exp(Y)) X)\rho_0(\exp(Y))p,
\end{align*}
for all $p\in \pFock$ and $X,Y\in \mf g$.
From the decomposition in \cite[Theorem 6.4]{BC3} it immediately follows that $\pFock$ is locally $K$-finite. Using Proposition \ref{ExplicitK} we also see that $\pFock$ is also $K$-multiplicity finite.
\end{proof}

\begin{Cor}\label{CorFrech}
The $(\mf g, K)$-module $\pFock$ integrates to a unique smooth Fréchet representation of moderate growth for the Lie supergroup $\ds D(2,1;\alpha)$.
\end{Cor}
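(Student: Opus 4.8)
The plan is to deduce this corollary directly from the super-analogue of the Casselman--Wallach globalisation theorem established in \cite{Alldridge}, using Proposition \ref{PropHC} as the sole nontrivial input. That theorem asserts (in the appropriate categorical formulation) that every Harish--Chandra supermodule for a super Harish--Chandra pair $(G_0,\mf g)$, with $G_0$ reductive of Harish--Chandra class, admits a smooth Fréchet globalisation of moderate growth, unique up to isomorphism. Since Proposition \ref{PropHC} already shows that $\pFock$ is a Harish--Chandra supermodule for $(\mf g,K)$ with $\mf g = D(2,1;\alpha)$ and $K = K_1\times K_2\times K_3$, essentially all that remains is to check that $\ds D(2,1;\alpha)$ lies in the class of Lie supergroups to which that theorem applies.

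Concretely, I would proceed in three steps. First, verify the group-side hypotheses: $G_0 = SL(V_1)\times SL(V_2)\times SL(V_3)\cong SL(2,\ds R)^3$ is connected and semisimple with finite centre, hence reductive in the Harish--Chandra class, and $K = K_1\times K_2\times K_3$ was already identified as a maximal compact subgroup (a $3$-torus); moreover $\mf g$ is finite-dimensional with $\mf g_{\ol 0} = \Lie(G_0)$, so $(G_0,\mf g)$ is an admissible super Harish--Chandra pair. Second, record the module-side hypotheses, all contained in Proposition \ref{PropHC}: $\pFock$ is finitely generated over $U(\mf g)$, is $K$-multiplicity finite, and is $K$-admissible (each $K$-isotypic component is finite-dimensional, since $K$ is a torus and the $K$-types appear with finite multiplicity by Theorem 6.4 of \cite{BC3} combined with the explicit action in Theorem \ref{ExplicitK}). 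Third, invoke the globalisation theorem of \cite{Alldridge}: it produces a smooth Fréchet representation of $\ds D(2,1;\alpha)$ of moderate growth whose underlying Harish--Chandra supermodule is $\pFock$, and the uniqueness statement follows from the automatic-continuity (or moderate-growth uniqueness) part of that theorem, so no separate argument is needed.

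The main obstacle is not mathematical but bookkeeping: one must match our presentation of $\ds D(2,1;\alpha)$ (the super Harish--Chandra pair $(G_0,\mf g)$, the explicit torus $K$, the three-grading) to the precise hypotheses and categorical setup of the globalisation theorem in \cite{Alldridge}, which uses a slightly different but equivalent formalism. Once this dictionary is in place the corollary is immediate. I would also remark, for clarity, that the uniqueness here is only up to isomorphism of smooth Fréchet $\ds D(2,1;\alpha)$-representations; this is exactly why one cannot conclude outright that the abstract globalisation agrees with the explicit representation $\rho_0$ of Theorem \ref{ExplicitK}, as comparing the two would require identifying them on a common dense core — the open point flagged in the surrounding text.
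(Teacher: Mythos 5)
Your proposal is correct and matches the paper's argument exactly: the paper proves Corollary \ref{CorFrech} by citing the globalisation theorem \cite[Theorem 4.6]{Alldridge} applied to the Harish--Chandra supermodule structure established in Proposition \ref{PropHC}. Your additional verification of the group-side hypotheses and the closing remark on uniqueness up to isomorphism are sound elaborations of that same one-line argument.
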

\begin{proof}
This follows immediately from \cite[Theorem 4.6]{Alldridge}.
\end{proof}

\subsection*{Acknowledgements}
SB is supported by a FWO postdoctoral junior fellowship from the Research Foundation Flanders (1269821N).

\bibliography{citations} 

\begin{thebibliography}{10}

\bibitem{BC3}
S.~Barbier and S.~Claerebout, ``A {S}chr\"{o}dinger model, {F}ock model and
  intertwining {S}egal-{B}argmann transform for the exceptional {L}ie
  superalgebra {$D(2, 1; \alpha)$},'' {\em J. Lie Theory}, vol.~31, no.~4,
  pp.~1153--1188, 2021.

\bibitem{dGM}
A.~de~Goursac and J.-P. Michel, ``{Superunitary Representations of Heisenberg
  Supergroups},'' {\em Int. Math. Res. Not. IMRN}, 08 2018.
\newblock rny184.

\bibitem{BrylinskiKonstant}
R.~Brylinski and B.~Kostant, ``Minimal representations, geometric
  quantizations, and unitarity,'' {\em Proc. Natl. Acad. Sci. USA}, vol.~91,
  no.~13, pp.~6026--6029, 1994.

\bibitem{DvorskySahi}
A.~Dvorsky and S.~Sahi, ``Explicit {H}ilbert spaces for certain unipotent
  representations. {II},'' {\em Invent. Math.}, vol.~138, no.~1, pp.~203--224,
  1999.

\bibitem{GanSavin}
W.~T. Gan and G.~Savin, ``On minimal representations definitions and
  properties,'' {\em Represent. Theory}, vol.~9, pp.~46--93, 2005.

\bibitem{HKM}
J.~Hilgert, T.~Kobayashi, and J.~M\"{o}llers, ``Minimal representations via
  {B}essel operators,'' {\em J. Math. Soc. Japan}, vol.~66, no.~2,
  pp.~349--414, 2014.

\bibitem{KM}
T.~Kobayashi and G.~Mano, ``The {S}chr\"{o}dinger model for the minimal
  representation of the indefinite orthogonal group {${\rm O}(p,q)$},'' {\em
  Mem. Amer. Math. Soc.}, vol.~213, no.~1000, pp.~vi+132, 2011.

\bibitem{Torasso}
P.~Torasso, ``Kirillov-{Duflo} orbit method and minimal representations of
  simple groups over a local field of characteristic zero,'' {\em Duke Math.
  J.}, vol.~90, no.~2, pp.~261--377, 1997.

\bibitem{VergneRossi}
M.~Vergne and H.~Rossi, ``Analytic continuation of the holomorphic discrete
  series of a semi-simple {L}ie group,'' {\em Acta Math.}, vol.~136, no.~1-2,
  pp.~1--59, 1976.

\bibitem{BC1}
S.~Barbier and K.~Coulembier, ``Polynomial realisations of {L}ie
  (super)algebras and {B}essel operators,'' {\em Int. Math. Res. Not. IMRN},
  no.~10, pp.~3148--3179, 2017.

\bibitem{BF}
S.~{Barbier} and J.~{Frahm}, ``{A minimal representation of the orthosymplectic
  Lie supergroup},'' {\em Int. Math. Res. Not. IMRN}, 2019.

\bibitem{BCD}
S.~Barbier, S.~Claerebout, and H.~De~Bie, ``A {F}ock model and the
  {S}egal-{B}argmann transform for the minimal representation of the
  orthosymplectic {L}ie superalgebra {$\mathfrak{osp}(m,2|2n)$},'' {\em SIGMA
  Symmetry Integrability Geom. Methods Appl.}, vol.~16, pp.~085, 33 pages,
  2020.

\bibitem{NeebSalmasian}
K.-H. Neeb and H.~Salmasian, ``Lie supergroups, unitary representations, and
  invariant cones,'' in {\em Supersymmetry in mathematics and physics},
  vol.~2027 of {\em Lecture Notes in Math.}, pp.~195--239, Springer,
  Heidelberg, 2011.

\bibitem{CCTV}
C.~Carmeli, G.~Cassinelli, A.~Toigo, and V.~S. Varadarajan, ``Unitary
  representations of super {L}ie groups and applications to the classification
  and multiplet structure of super particles,'' {\em Comm. Math. Phys.},
  vol.~263, no.~1, pp.~217--258, 2006.

\bibitem{Tuynman}
G.~M. Tuynman, ``The left-regular representation of a super {L}ie group,'' {\em
  J. Lie Theory}, vol.~29, no.~1, pp.~1--78, 2019.

\bibitem{Tuynman2}
G.~M. Tuynman, ``The super orbit challenge,'' in {\em Geometric methods in
  physics XXXVII. Workshop and summer school, Bia{\l}owie\.za, Poland, July
  1--7, 2018. Dedicated to Daniel Sternheimer on the occasion of his 80th
  birthday}, pp.~204--211, Cham: Birkh{\"a}user, 2019.

\bibitem{KO1}
T.~Kobayashi and B.~{\O}rsted, ``Analysis on the minimal representation of
  {$\rm O(p,q)$}. {I}. {R}ealization via conformal geometry,'' {\em Adv.
  Math.}, vol.~180, no.~2, pp.~486--512, 2003.

\bibitem{KO2}
T.~Kobayashi and B.~{\O}rsted, ``Analysis on the minimal representation of
  {$\rm O(p,q)$}. {II}. {B}ranching laws,'' {\em Adv. Math.}, vol.~180, no.~2,
  pp.~513--550, 2003.

\bibitem{KO3}
T.~Kobayashi and B.~{\O}rsted, ``Analysis on the minimal representation of
  {$\rm O(p,q)$}. {III}. {U}ltrahyperbolic equations on {${\mathbb
  R}^{p-1,q-1}$},'' {\em Adv. Math.}, vol.~180, no.~2, pp.~551--595, 2003.

\bibitem{Mu}
I.~M. Musson, {\em Lie superalgebras and enveloping algebras}, vol.~131 of {\em
  Graduate Studies in Mathematics}.
\newblock American Mathematical Society, Providence, RI, 2012.

\bibitem{CW}
S.-J. Cheng and W.~Wang, {\em Dualities and representations of {L}ie
  superalgebras}, vol.~144 of {\em Graduate Studies in Mathematics}.
\newblock American Mathematical Society, Providence, RI, 2012.

\bibitem{CCF}
C.~Carmeli, L.~Caston, and R.~Fioresi, {\em Mathematical foundations of
  supersymmetry}.
\newblock EMS Ser. Lect. Math., Z{\"u}rich: European Mathematical Society
  (EMS), 2011.

\bibitem{HKMO}
J.~Hilgert, T.~Kobayashi, J.~M\"{o}llers, and B.~{\O}rsted, ``Fock model and
  {S}egal-{B}argmann transform for minimal representations of {H}ermitian {L}ie
  groups,'' {\em J. Funct. Anal.}, vol.~263, no.~11, pp.~3492--3563, 2012.

\bibitem{Alldridge}
A.~Alldridge, ``Fr\'{e}chet globalisations of {H}arish-{C}handra
  supermodules,'' {\em Int. Math. Res. Not. IMRN}, no.~17, pp.~5182--5232,
  2017.

\end{thebibliography}
\bibliographystyle{ieeetr}

\end{document}